\newtheorem{thm}{Theorem}
\newtheorem{mydef}{Definition}
\newtheorem{lem}{Lemma}
\numberwithin{equation}{section}
\numberwithin{lem}{section}
\numberwithin{thm}{section}
\numberwithin{mydef}{section}
\numberwithin{figure}{section}
\title{Hodge-\MakeLowercase{de}Rham theory on fractal graphs and fractals}
\author{Skye Aaron}
\thanks{Research supported by the National Science Foundation through the Research Experiences for Undergraduates at Cornell.}
\author{Zach Conn}
\author{Robert Strichart}
\thanks{ Research supported in part by the National Science Foundation, Grand DMS 0652440.}
\author{Hui Yu}
\thanks{Research supported by the Summer Research Experience for Undergraduates, Department of Mathematics, The Chinese University of Hong Kong.}
\thanks{2010 \underline{Mathematics Subject Classification.} Primary: 28A80.\\ \underline{Key words and phrases}: Analysis on fractals, Sierpinski gasket, Hodge-deRham theory, k-forms, harmonic 1-forms, fractal graphs. }
\begin{document}
\maketitle

\begin{abstract}
We present a new approach to the theory of k-forms on self-similar fractals. We work out the details for two examples, the standard Sierpinski gasket and 3-dimensional Sierpinski gasket (SG$^3$), but the method is expected to be effective for many PCF fractals, and also infinitely ramified fractals such as the Sierpinski carpet (SC). Our approach is to construct k-forms and deRham differential operators $d$ and $\delta$ for a sequence of graphs approximating the fractal, and then pass to the limit with suitable renormalization, in imitation of Kigami's approach on constructing Laplacians on functions. One of our results is that our Laplacian on 0-forms is equal to Kigami's Laplacian on functions. We give explicit construction of harmonic 1-forms for our examples. We also prove that the measures on line segments provided by 1-forms are not absolutely continuous  with respect to Lebesgue measures.

\end{abstract}

\section{Introduction}
Following the successful development of a differential calculus on certain fractals (\cite{Bar}, \cite{Ki}, \cite{SB}), it would seem natural to try to develop an analogue of the Hodge-deRham calculus of k-forms. In recent years there have been several approaches to this problem (\cite{C}, \cite{CGIS}, \cite{CGIS2}, \cite{CS}, \cite{GI}, \cite{GI2}, \cite{GI3}, \cite{MH}, \cite{IRT}),  concentrating mainly on the case $k=1$. Here we present yet another approach. The fractals we deal with may be regarded as limits of a sequence of graphs, and the calculus of functions as developed by Kigami involves defining the fractal Laplacian as a limit of graph Laplacians, suitably renormalized. Our idea is to regard k-forms on the fractal as limits of k-forms on graphs, and to define the derivative $d$ and $\delta$ as suitably renormalized limits of the corresponding operators on the graphs. In particular, this means understanding the relationships among these objects as we pass from one graph to the next. On the level of the fractal, we will be guided by the intuition that a k-form is an object that can be integrated over k-dimensional subjects, and the operators $d$ and $\delta$ will be defined only for k-forms that are sufficiently smooth. It is by no means obvious how to realize these intuitions in a precise theory, and there are perhaps more than one way to do this.

We will concentrate on one specific fractal, the Sierpinski gasket (SG), which has become the `poster child' for Kigami's class of postcritically finite (PCF) self-similar fractals. SG is the limit of graphs $G_0, G_1, G_2,\cdots$, shown in Figure (1.1).

\begin{figure}
\scalebox{0.5}{\includegraphics{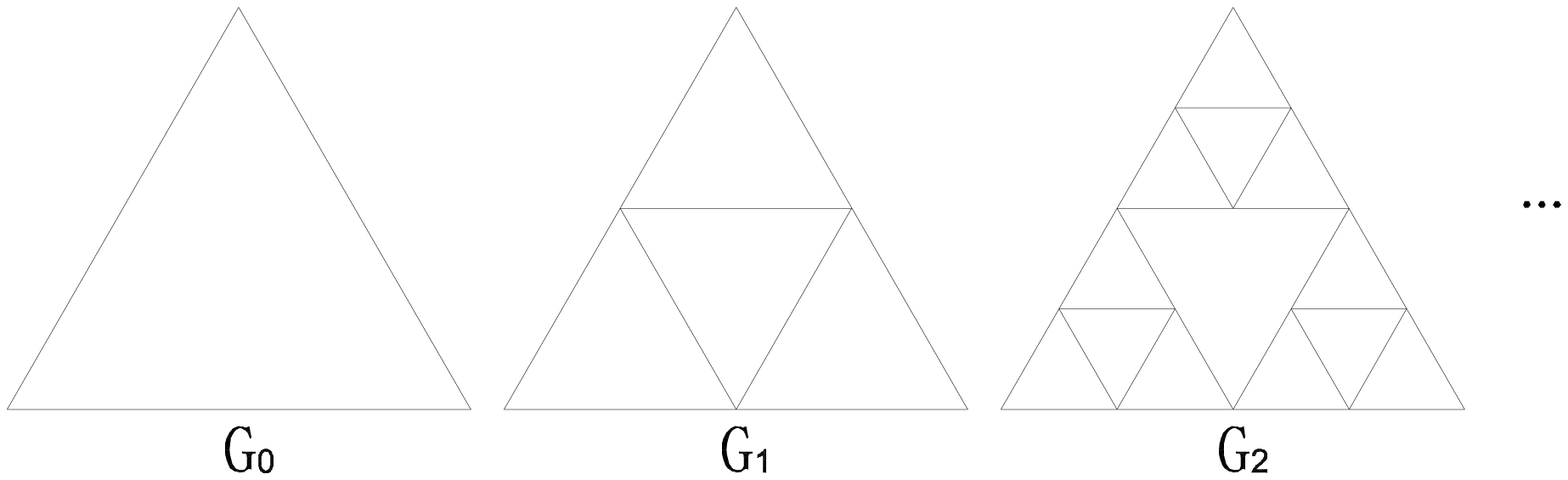}}
\caption{}
\end{figure}
From the topological point of view, SG has 1-dimensional homology generated by infinitely many independent cycles that are visible as `upside-down' triangles in the graphs $G_m$. It also contains infinitely many straight line segments that are also visible as edges in the graphs. We will therefor define 0-forms on SG as functions, 1-forms as measures on the line segments, and 2-forms as measures on SG. The `obvious' definition of $d_0$ from 0-forms to 1-forms is \begin{equation}
\int_{L}d_0f_0=f(b)-f(a)
\end{equation}  
if $a$ and $b$ are the endpoints of L. We will define the dual operator $\delta_1$ from 1-forms to 0-forms, which leads to a Laplacian $\Delta_0=-\delta_1 d_0$ on 0-forms. We will show that this Laplacian exactly coincides with Kigami's Laplacian on functions. Note that (1.1) requires $f_0$ to be continuous, but also the restriction of $f_0$ to any line segment must be of bounded variation so that the restriction of $d_0f_0$ to the line segment is in fact a measure. (Finite additivity is obvious from (1.1), but countable additivity is the issue.) We will show that the class of functions in the domain of Laplacian satisfy this condition. However, it turns out that the measures $d_0f_0$ for this class of functions are singular with respect to Lebesgue measure on the line segments.  

There is also an `obvious' definition of $d_1$ from 1-forms to 2-forms, namely,
\begin{equation}
d_1f_1(C)=\int_{\partial C}f_1
\end{equation} 
where C denotes any cell (the interior of a small triangle in one of the $G_m$ graphs). Here $\partial C$ consists of the oriented boundary line segment of the triangle, so the right side of (1.2) is just the sum of $f_1$ on three edges. (To specify a measure it suffices to give its values on cells of all levels.) It is clear from (1.1) that $d_1d_0f_0=0$. We define harmonic 1-forms as solutions to the two equations $d_1f_1=0$ and $\delta_1f_1=0$. We may then consider the cohomology/ homology pairing
\begin{equation}
\int_{\gamma}h
\end{equation} 
between harmonic 1-forms $h$, and homology cycles $\gamma$. We will explicitly construct a basis of harmonic 1-forms that give zero in (1.3) for all but a single homology generator $\gamma$. Since this is an infinite basis, there is an issue in how to deal with the closure of the span. We are unable to give a completely satisfactory resolution of the problem. 

The harmonic 1-forms gives one of the three pieces of an expected Hodge decomposition of 1-forms. Another piece is the image under $d_0$ of the 0-forms. In this piece the Laplacian is given by $\Delta_1=-d_0\delta_1$, since $d_1$ annihilates this piece of 1-forms. It is straightforward to observe that the spectrum of this portion of $-\Delta_1$ coincides with the spectrum of $-\Delta_0$, except for the zero eigenvalue which corresponds to the constant eigenfunction that is annihilated by $d_0$. In other words, if $f_0$ is an eigenfunction of $-\Delta_0$, so $-\Delta_0f_0=\lambda f_0$, with $\lambda\neq 0$, then $d_0f_0$ is an eigenfunction of $-\Delta_1$, so $-\Delta_1 d_0f_0=\lambda d_0f_0$. For the third piece of the Hodge decomposition we need to define $\delta_2$  as the dual of $d_1$. This is a somewhat ambiguous problem, as it depends on what class of 2-forms (measures) we would like to be able to apply $\delta_2$ to, and what class of 1-forms we would like to get as the image. Roughly speaking, $\delta_2f_2$ on an edge should be some sort of a `trace' of the measure $f_2$ on the edge, i.e., some renormalized limit of the measure of an $\epsilon$-neighborhood of the edge. We give two distinct realizations of this intuition, using different renormalizations. One allows 2-forms $f_2$ that are absolutely continuous with respect to the standard self-similar measure on SG, with continuous Radon-Nikodym derivative. The other allows 2-forms $f_2$ that are absolutely continuous with respect to the Kusuoka measure. (These measures are mutually singular, as first shown by Kusuoka.) In both cases the Laplacian $-\Delta_2$ on 2-forms and $-\Delta_1=\delta_2d_1$ on the $\delta_2$ portion of 1-forms are just multiples of the identity. This triviality is already seen on the graph level, and can be explained by the observation that cells only intersect at vertices, not edges. Every edge belongs to a unique cell, and boundaries of different cells are disjoint. This observation is valid on all PCF fractals.

In other words, to get a nontrivial calculus of k-forms for $k>1$ will require going outside the realm of PCF fractals. This perhaps explains why previous works have concentrated on the $k=1$ case. We believe that our method can be implemented for fractals such as the Sierpinski carpet (SC), but this remains to be seen.

The organization of this paper is as follows. In Section 2 we summarize the theory of k-forms on graphs. This material is mainly well-known, but we give the details for the convenience of the reader, and to set the notation for the remainder of the paper. In Section 3 we outline the steps involved in passing from one approximating graph to the next, on the route from fractal graphs to fractals. The actual passage to the limit would seem to require a careful analysis for each example.

In Section 4 we study the example SG. We present all the results described above, except for the proof of the singularity of the measure $d_0f_0$ on the line segments, which is presented in Section 5. This result is likely to be of independent interest.

In Section 6 we examine briefly another example, the 3-dimensional analog of the Sierpinski gasket (SG$^3$). We concentrated on what we believe to be the most interesting result, namely the explicit computation of harmonic 1-forms. We believe that similar results should be valid for many other PCF fractals.  

\section{Graphs}
Let $G$ be a finite connected graph with $E_0$ and $E_1$ the collections of its vertices and edges, respectively. For $e_0\in E_0$ and $e_1\in E_1$, $e_0\subset e_1$ means $e_0$ is one of the vertices of $e_1$. For $k\ge 2$, if we take $E_k$ to be the collection of complete $k+1$ subgraphs of $G$, then we obtain a hierarchy $E_0, E_1, E_2,\cdots, E_n$ of subgraphs.  For $e_k\in E_k$ and $e_{k+1}\in E_{k+1}$, $e_k\subset e_{k+1}$ means the vertices of $e_k$ are all vertices of $e_{k+1}$.

More generally, we may choose a set of model connected graphs $\Gamma_0,\Gamma_1,\Gamma_2,\cdots\Gamma_n$ with $\Gamma_0$ being a single vertex and $\Gamma_1$ being two vertices connected by an edge, and define $E_k$ to be a collection of subgraphs of $G$ isomorphic to $\Gamma_k$ (but not necessarily all of them). Recall that a subgraph consists of a certain subset of the vertices with all the edges in $G$ connecting them, that is, we are not allowed to throw away edges to obtain an isomorphism. To have a nontrivial theory we require  that $\Gamma_k$ is isomorphic to a subgraph of $\Gamma_{k+1}$, and that each vertex in $G$ lies in some $e_{k}$ for every $k$. We will also assume that for every $e_k\in E_k$ there exist $e_{k-1}\in E_{k-1}$ (unless $k=0$) and $e_{k+1}\in E_{k+1}$ (unless $k=n$) such that $e_{k-1}\in e_{k}\in e_{k+1}$. 

For example, if $G$ is an approximate Sierpinski gasket graph, $\Gamma_2$ is  the complete 3-graph. If $G$ is an approximate graph to the n-dimensional Sierpinski gasket,  $\Gamma_{k}$ is the complete (k+1)-graph for all $k\le n$. For a different type of example, let $G$ be a subgraph of the square lattice and take $\Gamma_2$ to be the 4-loop.
\begin{mydef} A parity function $sgn(e_k, e_{k+1})$ is a function, defined for all $e_k\in E_k$ and $e_{k+1}\in E_{k+1}$ $(k\le n-1)$, taking values in $\{-1,0,1\}$ and satisfying:
\begin{equation}sgn(e_k, e_{k+1})\neq 0 \Leftrightarrow e_k\subset e_{k+1}\end{equation}and
\begin{equation}\sum_{e_k\in E_k}sgn(e_{k-1},e_k)sgn(e_k,e_{k+1})=0\end{equation}
for all $e_{k-1}\subset e_{k+1}$.

\end{mydef}

For example, assign orientations to all subgraphs in all $E_k$'s and define $sgn(e_k,e_{k+1})=~1$ if the orientations of $e_k$ and $e_{k+1}$ are consistent, and $sgn(e_k,e_{k+1})=-1$ if their orientations are inconsistent. Then the sum in (2.2) contains exactly 2 nonzero terms with values $1$ and $-1$. Therefore we have a parity function. 

\begin{mydef}A set of weights is an assignment of positive values $\mu_k (e_k)$ to each $e_k\in E_k$.

The space $D_k$ of k-forms is the set of functions $f_k:E_k\to \mathbb{C}$ with an inner product structure\begin{equation}<f_k,g_k>_k=\sum_{e_k\in E_k}\mu_k(e_k)f_k(e_k)\overline{g_k(e_k)}.\end{equation}
\end{mydef}

\begin{mydef}The deRham derivative $d_k:D_k\to D_{k+1}$ is defined by
\begin{equation}d_k f_k(e_{k+1})=\sum_{e_k\in E_k}sgn(e_k,e_{k+1})f_k(e_k).\end{equation}
for $k\le n-1$ and $d_{n}f_n\equiv 0$.

The dual deRham derivative $\delta_k: D_k\to D_{k-1}$ is defined by duality
\begin{equation}<\delta_k f_k,g_{k-1}>_{k-1}=<f_k,d_{k-1}g_{k-1}>_k\end{equation} for $k\ge 1$ and $\delta_{0}f_0\equiv 0$.
\end{mydef}

An explicit formula for $\delta_k$ follows from direct computation
\begin{equation}\delta_k f_k(e_{k-1})=\sum_{e_k}\frac{\mu_k(e_k)}{\mu_{k-1}(e_{k-1})}sgn(e_{k-1},e_k)f_k(e_k).\end{equation}

We have the following theorem:
\begin{thm}$d_k d_{k-1}=0$ and $\delta_k \delta_{k+1}=0$.\end{thm}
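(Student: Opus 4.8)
The plan is to establish $d_k d_{k-1} = 0$ directly from the definitions, and then obtain $\delta_k \delta_{k+1} = 0$ for free by duality. For the first identity, I would fix $f_{k-1} \in D_{k-1}$ and an element $e_{k+1} \in E_{k+1}$, and expand $d_k(d_{k-1} f_{k-1})(e_{k+1})$ using (2.4) twice. This gives a double sum
\begin{equation}
d_k(d_{k-1}f_{k-1})(e_{k+1}) = \sum_{e_k \in E_k} sgn(e_k, e_{k+1}) \sum_{e_{k-1} \in E_{k-1}} sgn(e_{k-1}, e_k) f_{k-1}(e_{k-1}),
\end{equation}
which after interchanging the order of summation becomes
\begin{equation}
\sum_{e_{k-1} \in E_{k-1}} f_{k-1}(e_{k-1}) \left( \sum_{e_k \in E_k} sgn(e_{k-1}, e_k)\, sgn(e_k, e_{k+1}) \right).
\end{equation}
The inner sum is exactly the left-hand side of the parity relation (2.2), so it vanishes whenever $e_{k-1} \subset e_{k+1}$. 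For the remaining terms, where $e_{k-1} \not\subset e_{k+1}$, I need to check that the coefficient of $f_{k-1}(e_{k-1})$ is still zero: indeed, if $sgn(e_{k-1}, e_k)\, sgn(e_k, e_{k+1}) \neq 0$ then by (2.1) we would have $e_{k-1} \subset e_k \subset e_{k+1}$, forcing $e_{k-1} \subset e_{k+1}$, a contradiction; hence every term in the inner sum vanishes individually in that case. Therefore the whole expression is $0$, and since $f_{k-1}$ and $e_{k+1}$ were arbitrary, $d_k d_{k-1} = 0$.

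For the second identity, I would invoke the defining duality relation (2.5) rather than repeat the computation with formula (2.6). For any $f_{k+1} \in D_{k+1}$ and $g_{k-1} \in D_{k-1}$, apply (2.5) twice:
\begin{equation}
\langle \delta_k \delta_{k+1} f_{k+1}, g_{k-1} \rangle_{k-1} = \langle \delta_{k+1} f_{k+1}, d_{k-1} g_{k-1} \rangle_k = \langle f_{k+1}, d_k d_{k-1} g_{k-1} \rangle_{k+1}.
\end{equation}
By the first part, $d_k d_{k-1} g_{k-1} = 0$, so the right-hand side is $0$ for every $g_{k-1}$. Since the inner product $\langle \cdot, \cdot \rangle_{k-1}$ is nondegenerate (the weights $\mu_{k-1}$ are positive), this forces $\delta_k \delta_{k+1} f_{k+1} = 0$, and as $f_{k+1}$ was arbitrary, $\delta_k \delta_{k+1} = 0$. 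One should also note the trivial boundary cases: $d_n f_n \equiv 0$ and $\delta_0 f_0 \equiv 0$ make the compositions vanish automatically at the ends of the range, so the statement holds for all admissible $k$.

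I do not expect a serious obstacle here; the only point requiring a little care is the bookkeeping in the first part, namely recognizing that the parity relation (2.2) is stated only for $e_{k-1} \subset e_{k+1}$ and supplying the separate (easy) argument via (2.1) for the incidence-free pairs. Everything else is a routine interchange of finite sums and an appeal to nondegeneracy of the inner product. An alternative to the duality argument for $\delta_k \delta_{k+1} = 0$ would be to plug (2.6) into itself and observe that the $\mu_k(e_k)$ factors telescope against $\mu_{k-1}(e_{k-1})$ and $\mu_k(e_k)$ so that the same parity cancellation (2.2) applies; but the duality route is cleaner and avoids re-deriving the cancellation.
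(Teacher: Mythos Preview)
Your proof is correct and follows essentially the same approach as the paper: expand $d_k d_{k-1}$ as a double sum and invoke the parity relation (2.2), then obtain $\delta_k\delta_{k+1}=0$ by duality. You add one piece of bookkeeping the paper omits---the separate verification via (2.1) that the inner sum vanishes when $e_{k-1}\not\subset e_{k+1}$---which is a genuine detail, though elementary.
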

\begin{proof}\[d_k d_{k-1} f_{k-1}(e_{k+1})=\sum_{e_k}\sum_{e_{k-1}}sgn(e_k,e_{k+1})sgn(e_{k-1},e_{k})f_{k-1}(e_{k-1})\]so (2.2) implies $d_{k}d_{k-1}=0$. 

$\delta_k \delta_{k+1}=0$ follows from duality.\end{proof}

This theorem shows that the deRham complex
\begin{equation}D_0\overset{d_0}\longrightarrow D_1\overset{d_1}\longrightarrow D_2\cdots\overset{d_{n-1}}\longrightarrow D_n\overset{d_n}\longrightarrow 0\end{equation}
is an exact sequence, and so is the dual deRham complex
\begin{equation}D_n\overset{\delta_n}\longrightarrow D_{n-1}\overset{\delta_{n-1}}\longrightarrow D_{n-2}\cdots\overset{\delta_1}\longrightarrow D_{0}\overset{\delta_{0}}\longrightarrow 0.\end{equation}

Thus it makes sense to define the deRham cohomology spaces
\begin{equation}H_k=ker (d_k)/range( d_{k-1}).\end{equation}

\begin{mydef}The energy $\mathcal{E}_k$ is the symmetric bilinear form on $D_k$ given by
\begin{equation}
\mathcal{E}_k(f_k,g_k)=<d_kf_k,d_kg_k>_{k+1}+<\delta_kf_k,\delta_kg_k>_{k-1}\end{equation}(only one term if $k=0$ or $k=n$). 

The Laplacian $\Delta_k$ is the symmetric operator on $D_k$ given by
\begin{equation}-\Delta_k=\delta_{k+1}d_k+d_{k-1}\delta_{k}.\end{equation}

$f_k\in D_k$ is a harmonic k-form if \begin{equation}-\Delta_k f_k=0\end{equation}and the space of harmonic k-forms is denoted by $\mathcal{H}_k$. 
\end{mydef}

\begin{thm}$-\Delta_k$ is the operator associated to $\mathcal{E}_k$, namely,
\begin{equation}<-\Delta_k f_k,g_k>_k=\mathcal{E}_k(f_k,g_k)\end{equation} and this characterizes $-\Delta_k f_k$. 

In paricular, $f_k$ is harmonic if and only if 
\begin{equation}\mathcal{E}_k(f_k,f_k)=0.\end{equation} 
Or equivalently, 
\begin{equation}d_k f_k=0,\ and\  \delta_k f_k=0\end{equation}(only one condition if $k=0$ or $k=n$).
\end{thm}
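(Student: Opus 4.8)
The plan is to get the identity (2.13) by a purely formal manipulation using only the adjoint relation (2.5) that defines $\delta$, and then to read the harmonic characterizations off from positivity of the inner products. First I would expand, using $-\Delta_k = \delta_{k+1}d_k + d_{k-1}\delta_k$ from (2.11),
\[
\langle -\Delta_k f_k,\, g_k\rangle_k \;=\; \langle \delta_{k+1}(d_k f_k),\, g_k\rangle_k + \langle d_{k-1}(\delta_k f_k),\, g_k\rangle_k .
\]
To the first summand I apply (2.5) with $k$ replaced by $k+1$ and with the $k$-form $d_k f_k$ playing the role of $f_{k+1}$; it becomes $\langle d_k f_k,\, d_k g_k\rangle_{k+1}$. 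To the second summand I apply (2.5) in the other direction: since (2.5) asserts that $\delta_k$ is the adjoint of $d_{k-1}$ relative to $\langle\cdot,\cdot\rangle_{k-1}$ and $\langle\cdot,\cdot\rangle_k$, one also has $\langle d_{k-1}\phi,\, g_k\rangle_k = \langle \phi,\, \delta_k g_k\rangle_{k-1}$ for every $\phi\in D_{k-1}$, and with $\phi = \delta_k f_k$ the summand becomes $\langle \delta_k f_k,\, \delta_k g_k\rangle_{k-1}$. Adding the two reproduces exactly the right-hand side of (2.10), which is (2.13). When $k=0$ or $k=n$ one of the two summands is absent by the conventions $\delta_0\equiv 0$, $d_n\equiv 0$, matching the one-term version of (2.10), so nothing new is needed.

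For the clause that (2.13) \emph{characterizes} $-\Delta_k f_k$, I would note that $D_k$ is finite-dimensional and $\langle\cdot,\cdot\rangle_k$ in (2.3) is nondegenerate because every weight $\mu_k(e_k)$ is strictly positive. We have just exhibited $h=-\Delta_k f_k$ satisfying $\langle h,\, g_k\rangle_k = \mathcal{E}_k(f_k,g_k)$ for all $g_k\in D_k$; if $h'$ were another such element then $\langle h-h',\, g_k\rangle_k = 0$ for all $g_k$, forcing $h=h'$. So $-\Delta_k f_k$ is the unique element of $D_k$ representing the form $g_k\mapsto\mathcal{E}_k(f_k,g_k)$ through $\langle\cdot,\cdot\rangle_k$; this is what is meant by ``$-\Delta_k$ is the operator associated to $\mathcal{E}_k$.'' (Since $D_k$ is complex, $\mathcal{E}_k$ is really the Hermitian form defined by (2.10), conjugate-linear in its second slot, and all statements are to be read in that sense even though it is called ``symmetric bilinear.'')

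Finally, for the harmonic statement I would chain the equivalences: $f_k$ is harmonic $\iff -\Delta_k f_k = 0 \iff \langle -\Delta_k f_k,\, g_k\rangle_k = 0$ for all $g_k \iff \mathcal{E}_k(f_k,g_k) = 0$ for all $g_k$, the last step by (2.13) together with nondegeneracy. Specializing $g_k = f_k$ gives (2.14). For the reverse implication, $\mathcal{E}_k(f_k,f_k) = \|d_k f_k\|_{k+1}^2 + \|\delta_k f_k\|_{k-1}^2$ is a sum of two nonnegative reals, so $\mathcal{E}_k(f_k,f_k) = 0$ forces $d_k f_k = 0$ and $\delta_k f_k = 0$, which is (2.15); and then $-\Delta_k f_k = \delta_{k+1}(d_k f_k) + d_{k-1}(\delta_k f_k) = 0$, so $f_k$ is harmonic again. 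The implication (2.15) $\Rightarrow$ (2.14) is immediate from (2.10), closing the loop among ``harmonic'', (2.14), and (2.15). I do not anticipate any real obstacle here --- the content is finite-dimensional linear algebra; the only care needed is bookkeeping the complex conjugation in $\langle\cdot,\cdot\rangle_k$ when (2.5) is used in both directions, and invoking positivity of the $\mu_k$ at precisely the two places it matters: nondegeneracy of $\langle\cdot,\cdot\rangle_k$ for the characterization clause, and nonnegativity of the two squared norms for the converse half of the harmonic statement.
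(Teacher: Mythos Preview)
Your proof is correct and follows essentially the same line as the paper's: both derive (2.13) directly from the defining adjoint relation (2.5), invoke nondegeneracy of $\langle\cdot,\cdot\rangle_k$ for the characterization (the paper does this by testing against $g_k=\delta(e_k,\bar e_k)$), and deduce the harmonic equivalences from positivity of the inner products. The only cosmetic difference is that the paper routes the implication $(2.14)\Rightarrow\text{harmonic}$ through polarization to get $\mathcal{E}_k(f_k,g_k)=0$ for all $g_k$, whereas you pass through $(2.15)$ first; both are equally valid.
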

\begin{proof}
(2.13) follows from the definitions and duality, and this gives $-\Delta_k f_k(\bar{e_k})$ by using $g_k(\bar{e_k})=\delta(e_k,\bar{e_k})$.

(2.13) also implies that harmonic forms satisfy (2.14). Conversely, (2.14) implies $\mathcal{E}_k(f_k,g_k)=0$ for all $g_k\in D_k$ by polarization and hence $\Delta_k f_k=0$.

(2.15) implies $\Delta_k f_k=0$ trivially. Conversely, (2.13) implies \[<d_kf_k,d_kg_k>_{k+1}+<\delta_kf_k,\delta_kg_k>_{k-1}=0\] and hence (2.15) follows from positive definiteness of inner products.
\end{proof}

For example we can compute $-\Delta_0=\delta_1 d_0$ as:
\begin{align}\delta_1 d_0 f_0(e_0)&=\sum_{e_1\in E_1}\frac{\mu_1(e_1)}{\mu_0(e_0)}sgn(e_0,e_1)d_0f_0(e_1)\\ \notag&=\sum_{e_1\in E_1}\frac{\mu_1(e_1)}{\mu_0(e_0)}sgn(e_0,e_1)\sum_{e_0'\in E_0}sgn(e_0',e_1)f_0(e_0')\\ \notag&=\sum_{[e_0,e_0']\in E_1}\frac{\mu_1(e_1)}{\mu_0(e_0)}(f_0(e_0)-f_0(e_0')).\end{align}

The first equality follows from (2.6) and the last follows from the fact that every edge $e_1$ has two vertices $e_0$ and $e_0'$ with $sgn(e_0,e_1)=-sgn(e_0',e_1)$. Note that (2.16) is the standard graph Laplacian with weights $\mu_0$ and $\mu_1$ on vertices and edges [CdV].

\begin{thm}\textup{\textbf{Hodge Decomposition}} For each $k$, we have
\begin{equation}D_k=d_{k-1}D_{k-1}\oplus\delta_{k+1} D_{k+1}\oplus\mathcal{H}_k\end{equation} as an orthogonal direct sum. Thus $H_k$ is isomorphic to $\mathcal{H}_k$.\end{thm}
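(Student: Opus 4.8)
The plan is to run the standard finite-dimensional Hodge argument, using only three ingredients already established: the relations $d_kd_{k-1}=0$ and $\delta_k\delta_{k+1}=0$ (Theorem 2.1), the adjointness $\langle\delta_kf_k,g_{k-1}\rangle_{k-1}=\langle f_k,d_{k-1}g_{k-1}\rangle_k$ (Definition 2.4), and the characterization of harmonicity, $f_k\in\mathcal{H}_k$ if and only if $d_kf_k=0$ and $\delta_kf_k=0$ (Theorem 2.2). Because each $D_k$ is finite-dimensional with a genuine inner product, orthogonal complements are genuine complements and no completeness or closed-range issue intervenes.

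First I would verify that the three summands are pairwise orthogonal. For $d_{k-1}D_{k-1}$ against $\delta_{k+1}D_{k+1}$ one computes $\langle d_{k-1}g_{k-1},\delta_{k+1}h_{k+1}\rangle_k=\langle d_kd_{k-1}g_{k-1},h_{k+1}\rangle_{k+1}=0$. For $\mathcal{H}_k$ against $d_{k-1}D_{k-1}$, a harmonic $f_k$ has $\delta_kf_k=0$, so $\langle f_k,d_{k-1}g_{k-1}\rangle_k=\langle\delta_kf_k,g_{k-1}\rangle_{k-1}=0$; against $\delta_{k+1}D_{k+1}$, $d_kf_k=0$ gives $\langle f_k,\delta_{k+1}h_{k+1}\rangle_k=\langle d_kf_k,h_{k+1}\rangle_{k+1}=0$. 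Thus $\mathcal{H}_k$ lies in the orthogonal complement $W:=(d_{k-1}D_{k-1}\oplus\delta_{k+1}D_{k+1})^{\perp}$.

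Next I would show $W\subseteq\mathcal{H}_k$, which together with the previous step forces $W=\mathcal{H}_k$ and hence the orthogonal decomposition. If $f_k\in W$ then $\langle\delta_kf_k,g_{k-1}\rangle_{k-1}=\langle f_k,d_{k-1}g_{k-1}\rangle_k=0$ for all $g_{k-1}$, so $\delta_kf_k=0$; likewise $\langle d_kf_k,h_{k+1}\rangle_{k+1}=\langle f_k,\delta_{k+1}h_{k+1}\rangle_k=0$ for all $h_{k+1}$, so $d_kf_k=0$; by Theorem 2.2, $f_k\in\mathcal{H}_k$. This establishes $D_k=d_{k-1}D_{k-1}\oplus\delta_{k+1}D_{k+1}\oplus\mathcal{H}_k$ as an orthogonal direct sum.

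For the final assertion $H_k\cong\mathcal{H}_k$, I would identify $\ker d_k=d_{k-1}D_{k-1}\oplus\mathcal{H}_k$. The inclusion $\supseteq$ is immediate from $d_kd_{k-1}=0$ and harmonicity. For $\subseteq$, decompose $f_k=d_{k-1}a+\delta_{k+1}b+h$ with $h\in\mathcal{H}_k$; then $d_kf_k=d_k\delta_{k+1}b$, and if this vanishes then $\langle\delta_{k+1}b,\delta_{k+1}b\rangle_k=\langle b,d_k\delta_{k+1}b\rangle_{k+1}=0$, so $\delta_{k+1}b=0$. Hence $H_k=\ker d_k/\mathrm{range}(d_{k-1})=(d_{k-1}D_{k-1}\oplus\mathcal{H}_k)/d_{k-1}D_{k-1}\cong\mathcal{H}_k$. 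I do not expect a genuine obstacle here: the argument is purely formal once Theorem 2.2 has reduced harmonicity to the simultaneous vanishing of $d_k$ and $\delta_k$. The one point deserving a word of care is that it uses finite-dimensionality essentially (so that $(V^{\perp})^{\perp}=V$ and orthogonal complements split $D_k$), which is exactly where the later passage to the fractal limit will demand extra analysis.
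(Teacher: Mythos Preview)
Your proof is correct and follows essentially the same route as the paper's: both establish pairwise orthogonality via adjointness and $d_kd_{k-1}=0$, identify $\mathcal{H}_k$ with $(d_{k-1}D_{k-1}\oplus\delta_{k+1}D_{k+1})^\perp$ by the characterization $d_kf_k=\delta_kf_k=0$, and obtain $\ker d_k=d_{k-1}D_{k-1}\oplus\mathcal{H}_k$ from the computation $\langle\delta_{k+1}b,\delta_{k+1}b\rangle_k=\langle b,d_k\delta_{k+1}b\rangle_{k+1}$. Your explicit remark that finite-dimensionality is what makes orthogonal complements genuine complements is a useful addition not spelled out in the paper.
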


\begin{proof}By Theorem 2.1, \[<d_{k-1}f_{k-1},\delta_{k+1}g_{k+1}>_k=<d_kd_{k-1}f_{k-1},g_{k+1}>_{k+1}=0,\] thus the first two terms in the decomposition are orthogonal.

If $f_k\in\mathcal{H}_k$ then $d_k f_k=0$ and $\delta_{k}f_k=0$, so \[<f_k,\delta_{k+1}g_{k+1}>_k=<d_k f_k,g_{k+1}>_{k+1}=0\] and \[<f_k,d_{k-1}g_{k-1}>_{k}=<\delta_{k}f_k,g_{k-1}>_{k-1}=0.\] Thus $\mathcal{H}_k\subset (d_{k-1}D_{k-1}\oplus\delta_{k+1} D_{k+1})^{\perp}$. 

Conversely, $<f_k, d_{k-1}g_{k-1}>_k=0$ and $<f_k,\delta_{k+1}g_{k+1}>_{k}=0$ for all $g_{k-1}\in D_{k-1}$ and $g_{k+1}\in D_{k+1}$ imply $d_k f_k=0$ and $\delta_k f_k=0$, and therefore the orthogonal decomposition is proved.

By (2.17) we have $ker(d_k)\supset d_{k-1}D_{k-1}\oplus\mathcal{H}_k$. On the other hand, with \[<d_k \delta_{k+1}g_{k+1},g_{k+1}>_{k+1}=<\delta_{k+1}g_{k+1},\delta_{k+1}g_{k+1}>_{k}\] we have $d_k\delta_{k+1}g_{k+1}\neq 0$ if $\delta_{k+1}g_{k+1}\neq 0$. Thus $ker(d_k)= d_{k-1}D_{k-1}\oplus\mathcal{H}_k.$ This implies $\mathcal{H}_k$ is the orthogonal complement of $range(d_{k-1})$ in $ker(d_{k})$.
\end{proof}

\begin{mydef} A k-chain $C_k$ is a formal sum $C_k=\sum_{e_k\in E_k}a_k e_k$ with $a_k\in\mathbb{C}$. The collection of k-chains is denoted by $\mathcal{C}_k$.

The boundary operator $\partial_{k}:\mathcal{C}_k\to\mathcal{C}_{k-1}$ is given by 
\begin{equation}\partial_k e_k=\sum_{e_{k-1}\in E_{k-1}}sgn(e_{k-1},e_k)e_{k-1}\end{equation}and
\begin{equation}\partial_k(\sum_{e_k\in E_k}a_k e_k)=\sum_{e_k\in E_k}a_k\partial_k e_k.\end{equation}

A k-chain $C$ is a k-cylce if $\partial_k C_k=0$.

Integration of k-forms along k-chains is defined by 
\begin{equation}\int_{C_k}f_k=\sum_{e_k\in E_k}a_k f_k(e_k).\end{equation} 
\end{mydef}

The above definition gives a duality between $D_k$ and $\mathcal{C}_k$ and the following theorem is an immediate consequence of the definitions.

\begin{thm}\textup{\textbf{Stokes' Theorem}}\begin{equation}\int_{C_k}df_{k-1}=\int_{\partial_k C_k}f_{k-1}.\end{equation}

In particular \begin{equation}\int_{C_k}df_{k-1}=0\end{equation} if $C_k$ is a k-cycle.

\end{thm}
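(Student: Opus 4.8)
The plan is simply to expand both sides of (2.21) using the definitions of Section 2 and to observe that they are one and the same finite double sum, written in two orders. Fix a $k$-chain $C_k=\sum_{e_k\in E_k}a_k e_k$.

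First I would compute the left-hand side. By the definition of integration (2.20) and the formula (2.4) for $d_{k-1}$,
\[
\int_{C_k} d f_{k-1}=\sum_{e_k\in E_k}a_k\,(d_{k-1}f_{k-1})(e_k)=\sum_{e_k\in E_k}\sum_{e_{k-1}\in E_{k-1}}a_k\,sgn(e_{k-1},e_k)\,f_{k-1}(e_{k-1}).
\]
Next I would compute the right-hand side. Extending $\partial_k$ to chains by (2.18)--(2.19) and collecting terms,
\[
\partial_k C_k=\sum_{e_k\in E_k}a_k\sum_{e_{k-1}\in E_{k-1}}sgn(e_{k-1},e_k)\,e_{k-1}=\sum_{e_{k-1}\in E_{k-1}}\Bigl(\sum_{e_k\in E_k}a_k\,sgn(e_{k-1},e_k)\Bigr)e_{k-1},
\]
and then applying (2.20) once more gives
\[
\int_{\partial_k C_k} f_{k-1}=\sum_{e_{k-1}\in E_{k-1}}\Bigl(\sum_{e_k\in E_k}a_k\,sgn(e_{k-1},e_k)\Bigr)f_{k-1}(e_{k-1}).
\]
Since $E_{k-1}$ and $E_k$ are finite sets, the order of summation in these two expressions may be interchanged freely, so the coefficient of $f_{k-1}(e_{k-1})$ on each side equals $\sum_{e_k\in E_k}a_k\,sgn(e_{k-1},e_k)$, and (2.21) follows term by term. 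For (2.22), if $C_k$ is a $k$-cycle then $\partial_k C_k=0$ by definition, so the right-hand side of (2.21) is the integral of $f_{k-1}$ over the zero chain, which is $0$ by (2.20).

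I do not expect any real obstacle here: the entire content is the reindexing that pulls the outer sum over $e_k$ past the inner sum over $e_{k-1}$, and finiteness of the graph makes this automatic. Conceptually, the statement is just the assertion that, under the pairing (2.20) between $D_\bullet$ and $\mathcal{C}_\bullet$, the operator $d_{k-1}$ is the \emph{transpose} of the boundary operator $\partial_k$; both are encoded by the same matrix $sgn(e_{k-1},e_k)$, which is precisely why (2.21) is an exact identity rather than merely an inequality or a limiting statement.
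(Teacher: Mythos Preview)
Your proof is correct and is exactly the routine unwinding of definitions that the paper has in mind; the paper does not write out a proof at all, simply noting that the theorem ``is an immediate consequence of the definitions,'' and your computation makes that remark explicit.
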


Also note that (2.2) implies $\partial_{k-1}\partial_{k}=0$ and hence the boundary complex
\begin{equation}\mathcal{C}_n\overset{\partial_n}\longrightarrow\mathcal{C}_{n-1}\overset{\partial_{n-1}}\longrightarrow\mathcal{C}_{n-2}\cdots\overset{\partial_1}\longrightarrow\mathcal{C}_0\overset{\partial_0}\longrightarrow 0\end{equation} is exact.

A special case of 1-chains concerns paths $\gamma=\sum_j(\pm)e_1^{j}$ of consecutive edges with the signs chosen so that the orientations are consistent. Then (2.21) says
\begin{equation}\int_{\gamma}df_0=f_0(q)-f_0(p)\end{equation} where $p$ and $q$ are endpoints of the path, which gives us a form of the fundamental theorem of calculus. Similarly (2.22) says 
\begin{equation}\int_{\gamma}df_0=0\end{equation}if $\gamma$ is a closed path.

It is sometimes important to compute the dimensions of spaces in the Hodge decomposition. Note that part of the proof of Theorem 2.3 gives $d_k D_k=d_k \delta_{k+1}D_{k+1}$ so
\begin{equation}dim(d_kD_k)=dim(\delta_{k+1}D_{k+1}).\end{equation}It follows that
\begin{equation}dim(D_k)=dim(d_{k-1}D_{k-1})+dim(d_{k}D_k)+dim(\mathcal{H}_k).\end{equation}

In particular, since constants are the only harmonic 0-forms we have $dim(\mathcal{H}_0)=1$ and 
\begin{equation}dim(d_0 D_0)=dim (D_0)-1.\end{equation}

Now we look into the spectra of Laplacians $-\Delta_k$. $\mathcal{H}_k$ is the 0-eigenspace of $-\Delta_k$. The remaining eigenspaces can be split among the other two terms of the Hodge decomposition, which we call d-spectrum($-\Delta_k$) (the eigenfunctions and associated eigenvalues in $d_{k-1}D_{k-1}$) and $\delta$-spectrum($-\Delta_k$)(the eigenfunctions and associated eigenvalues in $\delta_{k+1}D_{k+1}$). The two have the following relation:

\begin{thm}For $k\ge 1$, d-spectrum($-\Delta_k$) and $\delta$-spectrum($-\Delta_{k-1}$) contain the same eigenvalues (counting multiplicities), and the corresponding eigenfunctions $f_k$ and $f_{k-1}$ are related by $f_k=d_{k-1}f_{k-1}$ and $f_{k-1}=\delta_{k}f_k$.\end{thm}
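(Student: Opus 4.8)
The plan is to produce explicit mutually (almost-)inverse maps between the two eigenspaces, built from $d_{k-1}$ and $\delta_k$, exploiting the fact that these operators intertwine the Laplacians. First I would record the two intertwining identities $(-\Delta_k)\,d_{k-1}=d_{k-1}\,(-\Delta_{k-1})$ and $\delta_k\,(-\Delta_k)=(-\Delta_{k-1})\,\delta_k$. Each follows by expanding $-\Delta_j=\delta_{j+1}d_j+d_{j-1}\delta_j$ and applying $d_jd_{j-1}=0$ and $\delta_j\delta_{j+1}=0$ from Theorem 2.1; every expansion collapses to the single surviving term $d_{k-1}\delta_kd_{k-1}$ (respectively $\delta_kd_{k-1}\delta_k$). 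The same two relations of Theorem 2.1 also show that $-\Delta_k$ is block-diagonal for the Hodge decomposition of Theorem 2.3, so the d-spectrum and $\delta$-spectrum are well-defined as the spectra of the restrictions of $-\Delta_k$ and $-\Delta_{k-1}$ to $d_{k-1}D_{k-1}$ and $\delta_kD_k$.

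Next I would check that these restrictions are injective, so every eigenvalue in play is nonzero. If $f_k\in d_{k-1}D_{k-1}$ with $-\Delta_kf_k=0$, then $\mathcal{E}_k(f_k,f_k)=0$ by Theorem 2.2, hence $f_k\in\mathcal{H}_k\cap d_{k-1}D_{k-1}=\{0\}$ by the orthogonality in Theorem 2.3; the same argument applies to $-\Delta_{k-1}$ on $\delta_kD_k$. Moreover, on $d_{k-1}D_{k-1}$ the term $\delta_{k+1}d_k$ vanishes, so $-\Delta_k=d_{k-1}\delta_k$ there; and on $\delta_kD_k$ the term $d_{k-2}\delta_{k-1}$ vanishes (since $\delta_{k-1}\delta_k=0$), so $-\Delta_{k-1}=\delta_kd_{k-1}$ there.

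Then fix $\lambda\neq 0$ and let $V_\lambda\subset d_{k-1}D_{k-1}$ be the $\lambda$-eigenspace of $-\Delta_k$ and $W_\lambda\subset\delta_kD_k$ the $\lambda$-eigenspace of $-\Delta_{k-1}$. By the intertwining identities, $\delta_k$ sends $V_\lambda$ into the $\lambda$-eigenspace of $-\Delta_{k-1}$, and $\delta_kf_k$ automatically lies in $\delta_kD_k$, so $\delta_k(V_\lambda)\subset W_\lambda$; symmetrically $d_{k-1}(W_\lambda)\subset V_\lambda$. Using the collapsed forms of the Laplacians from the previous paragraph, $d_{k-1}\circ\delta_k=-\Delta_k=\lambda\,\mathrm{id}$ on $V_\lambda$ and $\delta_k\circ d_{k-1}=-\Delta_{k-1}=\lambda\,\mathrm{id}$ on $W_\lambda$. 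Since $\lambda\neq 0$, the maps $\delta_k\colon V_\lambda\to W_\lambda$ and $d_{k-1}\colon W_\lambda\to V_\lambda$ are inverse to each other up to the scalar $\lambda$, hence bijections; therefore $\dim V_\lambda=\dim W_\lambda$. This yields the equality of the d-spectrum of $-\Delta_k$ and the $\delta$-spectrum of $-\Delta_{k-1}$ with multiplicities, and the stated correspondence $f_k=d_{k-1}f_{k-1}$, $f_{k-1}=\delta_kf_k$ of eigenfunctions (after the obvious normalization).

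All the computations are short; the only care needed is the bookkeeping of which Hodge summand each object lives in and the resulting collapse of the Laplacian to a single term — in particular that $\delta_{k-1}\delta_k=0$ is exactly what makes $-\Delta_{k-1}$ act as $\delta_kd_{k-1}$ on $\delta_kD_k$. I do not expect a genuine obstacle here; if anything, the one point worth stating carefully is that the d- and $\delta$-spectra are well-defined, i.e. that $-\Delta_k$ respects the Hodge decomposition, which is again immediate from $d_kd_{k-1}=0$ and $\delta_k\delta_{k+1}=0$.
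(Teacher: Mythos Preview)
Your proof is correct and follows essentially the same approach as the paper: both use $d_{k-1}$ and $\delta_k$ to transport eigenfunctions between the $\delta$-spectrum of $-\Delta_{k-1}$ and the d-spectrum of $-\Delta_k$, exploiting the collapse of the Laplacian to a single term on each Hodge summand. Your version is in fact more careful than the paper's terse argument, since you explicitly verify that the compositions $d_{k-1}\delta_k$ and $\delta_kd_{k-1}$ act as $\lambda\,\mathrm{id}$ on the respective eigenspaces, which is what actually establishes the equality of multiplicities and the nonvanishing of the transported eigenfunctions.
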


\begin{proof}If $f_{k-1}$ is in $\delta$-spectrum($-\Delta_{k-1}$) with eigenvalue $\lambda$, then $\delta_k d_{k-1}f_{k-1}=\lambda f_{k-1}$ since $d_{k-2}\delta_{k-1}f_{k-1}=0$. We then have 
\[-\Delta_k(d_{k-1}f_{k-1})=d_{k-1}\delta_k d_{k-1}f_{k-1}=\lambda d_{k-1}f_{k-1}.\]
So $d_{k-1}f_{k-1}$ is a $\lambda$-eigenfunction in d-spectrun($-\Delta_{k}$).

Similarly if $f_{k}$ is a $\lambda$-eigenfunction in d-spectrum($-\Delta_k$), then $\delta_k f_k$ is a $\lambda$-eigenfunction in $\delta$-spectrum($-\Delta_{k-1}$).
\end{proof}

\section{From Graphs to Fractals}
Suppose $\mathcal{K}$ is a fractal that can be realized, in some way, as the limit of graphs $\{G_m\}_{m=0}^{\infty}$. We want the theory of k-forms on $G_m$'s to give a theory of k-forms on $\mathcal{K}$ in the limit after necessary renormalization. In this section we discuss some general strategy for this before we work on specific examples in later sections.

Kigami \cite{Ki} has introduced a family of fractals, post-critically finite (PCF) self-similar fractals, that can be realized as attractors of some iterated function systems (IFS), say, $\{F_j\}_{j=1}^N$ on Euclidean spaces (The ambient space plays no role in the theory and is introduced to simplify the discussion). We assume that $F_j$'s are contractive similarities, and that there exists a finite set $V_0$, the boundary of $\mathcal{K}$, consisting of fixed points $q_j$ of some of the $F_j$ such that  
\begin{equation}F_j\mathcal{K}\cap F_k\mathcal{K}\subset F_j V_0\cap F_k V_0\end{equation}for all $j\neq k$. In particular, the \textit{cells at level 1}, $\{F_j\mathcal{K}\}_{j=1}^N$, can only intersect at finite many points. But on the other hand, we assume that there are enough intersections to keep $\mathcal{K}$ connected, and note that (3.1) implies that $\mathcal{K}$ becomes disconnected if the finite set $V_0$ is removed (a property sometimes called \textit{finite ramification}).

More generally, for any \textit{word} $\omega=(\omega_1,\omega_2,\omega_3,\cdots,\omega_m)$ of length $|\omega|=m$, define \[F_{\omega}=F_{\omega_1}\circ F_{\omega_2}\circ F_{\omega_3}\circ\cdots \circ F_{\omega_m}\]and 
\[\mathcal{K}_{\omega}=F_{\omega}\mathcal{K},\] a cell of level $m$. Again we assume $K_{\omega}\cap\mathcal{K}_{\omega'}\subset F_{\omega}V_0\cap F_{\omega'}V_0$ for any $\omega\neq\omega'$ and $|\omega|=|\omega'|$(This does not necessarily follow from (3.1)).
 
Now for such a PCF fractal we have a natural sequence of graph approximations. Let $G_0$ be the complete graph with vertices in $V_0$. Let $V_1=\{F_j q_k\}$, the images of the boundary points under the mappings in the IFS. $G_1$ is then the graph with points in $V_1$ as vertices and images of edges of $G_0$ under the IFS as its edges. In other words, $G_1$ consists of $N$ copies of $G_0$ with certain identified vertices. Note that $V_0\subset V_1$ but the edges in $G_0$ are not necessarily edges in $G_1$. 

Iterating this procedure we obtain a sequence of graphs $G_0, G_1,G_2\cdots$ with sets  of vertices 
\begin{equation}V_{0}\subset V_1\subset V_2\subset \cdots.\end{equation} Then the subgraphs $F_{\omega}G_0$  of $G_{m}$  $(|\omega|=m)$ are cells of level m. Note that vertices in $V_m$ are all points in the actual fractal and $V_{*}=\cup V_m$ is dense in $\mathcal{K}$ in the natural topology.

Now we choose model graphs $\Gamma_0,\Gamma_1,\Gamma_2,\cdots\Gamma_N$ and define $E_k^0$ to be a collection of subgraphs of $G_0$ isomorphic to $\Gamma_k$. For each $m$, let $E_k^m$ be the union of images of $F_{\omega}(E_k^0)$ for $|\omega|=m$. Each graph in $E_k^m$ then belongs to a single m-cell in $G_m$ for $k\ge 1$ but vertices in $E_0^m$ might belong to several cells due to identification.

Once a function $sgn(e_k,e_{k+1})$ is defined on $G_0$ satisfying (2.1) and (2.2), it can be transported to $G_m$'s in a natural way, and (2.1) and (2.2) still hold. Similarly given a set of weights $\mu_k$ on $E_k^0$ and a collection of positive numbers $\{b_k^j\}_{j=1}^N$, we can define a set of weights on $E_k^m$ via:
\begin{equation}\mu_k^m (e_k^m)=b_k^{\omega}\mu_k^0(e_k^0)\end{equation} if $e_k^m=F_{\omega}e_k^0$ with $|\omega|=m$ and $k\ge 1$, and 
\begin{equation}\mu_0^m(e_0^m)=\sum_{F_{\omega}e_0^0=e_0^m}b_k^{\omega}\mu_0^0(e^0_0)\end{equation} if $k=0$, where $b_k^{\omega}=\prod_{j=1}^m b_k^{\omega_j}$.

In this way a sequence of Laplacians and deRham complexes can be defined as in Section 1. The problem is to relate the structures when $m$ varies and to pass to a limit when $m\to\infty$. We deal with this on a case-by-case basis.

Kigami \cite{Ki} defined a notion of regular harmonic structure on PCF fractals, which is closely related to our setup and especially to the weights $\mu_1$. Recall that energy on $G_0$ is given by
\begin{equation}\mathcal{E}_0^0(f_0^0,g_0^0)=\sum_{e_1^0\in E_1^0}\mu_1(e_1^0)(f_0^0(y)-f_0^0(x))(g_0^0(y)-g_0^0(x)),\end{equation}where $e_1^0=[x,y]$, for pairs of functions $f_0^0, g_0^0$ on $V_0=E_0^0$. With (3.3) this extends to energies $\mathcal{E}_0^m$ on $G_m$ via
\begin{equation}\mathcal{E}_0^m(f_0^m,g_0^m)=\sum_{|\omega|=m}\sum_{e_1^0\in E_1^0}b_1^{\omega}\mu_1(e_1^0)(f_0^m(F_{\omega}y)-f_0^m(F_{\omega}x))(g_0^m(F_{\omega}y)-g_0^m(F_{\omega}x)).\end{equation}

Note that a function $f_0^{m-1}$ on $V_{m-1}=E_0^{m-1}$ can be extended to $f_0^m$ on $V_m=E_0^m$ in many ways. The extension $\widetilde{f}_0^m$ that minimizes $\mathcal{E}_0^m(f_0^m)=\mathcal{E}_0^m(f_0^m,f_0^m)$ is called the harmonic extension of $f_0^{m-1}$. To have a harmonic structure we require $\mathcal{E}_0^m(\widetilde{f}_0^m)=\mathcal{E}_0^{m-1}(\widetilde{f}_0^{m-1})$ and the structure is regular if $0<b_1^j<1$ for all $j$. There is a rather large literature on harmonic structures (see for example [Sa], [CS], [P]).

Fix a regular harmonic structure. We obtain the energy $\mathcal{E}_0$ on $\mathcal{K}$ by 
\begin{equation}\mathcal{E}_0(f)=\lim_{m\to +\infty}\mathcal{E}_0^m(f_0|_{V_m}).\end{equation} The limit always exists because $\mathcal{E}_0^m(f_0|_{V_m})$ is an increasing sequence. Define $dom(\mathcal{E}_0)$ to be the collection of functions such that $\mathcal{E}_0(f)<\infty$. Such functions are always continuous and hence the restriction to the dense set $V_{*}$ determines the function.
Define the quadratic form on $dom(\mathcal{E}_0)$ by 
\begin{equation}\mathcal{E}_0(f,g)=\lim_{m\to\infty}\mathcal{E}_0^{m}(f|_{V_m},g|_{V_m})\end{equation}Only constant functions have zero energy and the space $dom(\mathcal{E}_0)$ modulo constants is a Hilbert space with the energy inner product. See \cite{Ki} for details.

We may also use the weights $\mu_0$ to define a measure on $\mathcal{K}$ as the weak limit of
\begin{equation}\sum_{e_0^m\in E_0^m}\mu^m_0(e_0^m)\delta_{e_0^m}.\end{equation}To obtain a probability measure we need
\begin{equation}\sum_{e_0^0\in E_0^0}\mu_0^0(e_0^0)=1\end{equation}and
\begin{equation}\sum_j b_0^j=1.\end{equation}
We denote this measure also by $\mu_0$ and note that $\mu_0(F_{\omega}\mathcal{K})=b_0^{\omega}$ so the measure does not depend on the initial distribution of weights on $V_0$. Now since the discrete Laplacians $-\Delta_m$ on $G_m$ is given by (see (2.16))
\begin{equation}-\Delta_0^m f_0^m(x)=\sum_{[x,y]\in E_1^m}\frac{\mu_1^m([x,y])}{\mu_0^m(x)}(f_0^m(x)-f_0^m(y))\end{equation} we might define a Laplacian $-\Delta_0$ on $\mathcal{K}$ by
\begin{equation}-\Delta_0f_0(x)=\lim_{m\to\infty}-\Delta_0^m(f_0|_{V_m}),\end{equation}and define $dom(\Delta_0)$ to be the space of functions such that the above limit exists uniformly for all points in $V\backslash V_0$.

Or equivalently, we can define $-\Delta_0$ by the weak formulation 
\begin{equation}\mathcal{E}_0(f_0.g_0)=\int(-\Delta_0 f_0)g_0d\mu_0\end{equation}
for all $g_0\in dom(\mathcal{E}_0)$ with $g|_{V_0}=0$. Thus we see that the Laplacians defined by Kigami is equivalent to the deRham Laplacian on 0-forms. In particular the harmonic 0-forms are constants.

Higher order forms will be discussed in later sections. However, the fact that $e_k^m$ for $k\ge 1$ lies in a single m-cell implies that $-\Delta_k^m$ is a multiple of the identity, and so any limit, $-\Delta_k$, we might obtain is again a multiple of the identity. Thus in PCF cases, only the theory of 0-forms and 1-forms is nontrivial.

\section{Sierpinski Gasket}
The Sierpinski gasket can be realized in the plane as the attractor of the IFS \[F_j(x)=\frac{1}{2}(x-q_j)+q_j,\] where $\{q_j\}$ are vertices of a regular triangle. That is, $V_0=\{q_1,q_2,q_3\}$ is the set of vertices in the initial graph $G_0$, and subsequent graphs $G_m$ are obtained by applying the IFS to $G_0$ as described in Section 3. Let $E_0^0$ be the collection of vertices in $G_0$, $E_1^0$ the collection of edges in $G_0$, oriented counterclockwise, and $E_2^0$ the triangle, and apply the IFS iteratively to obtain $E_k^m$ $(m=1,2,\cdots)$. Define the parity function $sgn$ using this orientation as described in Section 2. Note that there are complete 3-graphs in $G_m$ that are not in $E_2^m$, namely, the upside-down triangles.

We use the most symmetric weights for $G_0$, giving each point in $V_0$ the weight $1/3$, each edge in $G_0$ the weight $1$ and the triangle the weight $1$. Then for $m\ge 1$,
\begin{equation}\begin{cases}
\mu_0(e_0^m)&=\begin{cases}\frac{1}{3^m}\ (e_0^m\in V_0)\\ \frac{2}{3^m}\ (e_0^m\in E_0^m\backslash V_0)\end{cases}\\
\mu_1(e_1^m)&=(\frac{5}{3})^m\\
\mu_2(e_2^m)&=\frac{1}{3^m}.\end{cases}\end{equation}
In terms of (3.3) and (3.4), we have chosen $b_0^j=\frac{1}{3}, b_1^j=\frac{5}{3},b_2^j=\frac{1}{3}$. Such
renormalization for $\mu_0$ and $\mu_2$ makes them measures on $E_0^m$ and m-cells, respectively. They both converge to the standard probability measure on the Sierpinski gasket. $\mu_1$ is renormalized to give the energy on the gasket. 

Now suppose $f_0$ is a continuous function (a 0-form) on SG; restriction of $f_0$ to $E_0^m$ gives a 0-form on $G_m$. The inner product\begin{equation}<f_0,g_0>_0=\int_{SG}f_0g_0d\mu\end{equation}is the same as the limit of inner product on graphs.

Next we define 1-forms on SG in such a way that the restriction to $E_1^m$ gives a 1-form on $G_m$ (Note that edges in $E_1^m$ are oriented curves in SG). To do this, consider the vector space of $\sum c_j\gamma_j$, where $c_j\in \mathbb{C}$ and $\gamma_j\in\cup_m E_1^m$. We define 1-forms to be elements in the dual space subject to the consistency condition:
\begin{equation}f_1(e_1^m)=\sum_{e_1^k\in e_1^m}f_1(e_1^k)\end{equation} for all $k\ge m$.
Equivalently, $f_1$ defines a signed measure on each edge $e_1^m\in E_1^m$. In particular, if $f_0$ is a 0-form whose restriction to each edge is of bounded variation, then
\begin{equation}d_0f_0([p,q])=f_0(q)-f_0(p)\end{equation} gives a 1-form.  $f_0$ being continuous means the measure is continuous when restricted to each edge. This is too large a class of functions to yield an interesting theory. 

Recall that in Section 3 we defined a space $dom(\mathcal{E}_0)$ to be the collection of functions such that $\mathcal{E}_0(f)<\infty$, where $\mathcal{E}_0$ is defined (see (3.7)), with our choice of weights, by  
\begin{equation}\mathcal{E}_0(f_0)=\lim_{m\to\infty}(\frac{5}{3})^m\sum_{e_1^m\in E_1^m}|d_0f_0(e_1^m)|^2.\end{equation}
Or, equivalently, in terms of the inner product for 1-foms,
\begin{equation}<f_1,f_1>_{1}=\sup_{m\to\infty}(\frac{5}{3})^m\sum_{e_1^m\in E_1^m}|f_1(e_1^m)|^2,\end{equation}we can define 
\begin{equation}\mathcal{E}_0(f_0)=<d_0f_0,d_0f_0>_1.\end{equation}
Note that the right-hand side of (4.5) is always nondecreasing thus we might replace $sup$ with $lim$. However, this is not true for general 1-forms. Also, technical problems concerning convergence arise if we would like to interpret (4.6) as a quadratic form associated with an inner product $<f_1,g_1>_1$. But $\mathcal{E}_0(f_0)=\mathcal{E}_0(f_0,f_0)$ is associated with the quadratic form $\mathcal{E}_0(f_0,g_0)$ since $f_0\in dom(\mathcal{E}_0)$ satisfies the following estimate of H$\ddot{o}$lder type:
\begin{equation}|d_0f_0(e_1^m)|\le(\frac{3}{5})^{m/2}\mathcal{E}_0(f_0).\end{equation} Later we show $f_0\in dom(\Delta)$ implies that the restriction of $f_0$ to edges is of bounded variation and hence $d_0f_0$ gives a finite measure.

Now suppose $f_1$ is a 1-form on SG and write $f_1^m$ to be the restriction to $E_1^m$. For $e_0^m\in E_0^m\backslash V_0$ one has 
\begin{equation}\delta_1^m f_1^m(e_0^m)=\frac{3}{2}5^m\sum_{e_1^m\in E_1^m}sgn(e_0^m,e_1^m)f_1^m(e_1^m),\end{equation}
and there are exactly four nonzero terms in the sum, two with $sgn=+1$ and two with $sgn=-1$. For any nonboundary point in the dense set $V_{*}=\cup_{m}E_0^m$, the expression (4.9) makes sense for $m$ sufficiently large, so $\delta_1f_1$ may be defined as the limit of (4.9) as $m\to\infty$. We would like, for a certain class of 1-forms, to have this limit exists uniformly and the limit to be continuous so as to be extended to a continuous function on SG. For 1-forms of the form $d_0f_0$ where $f_0\in dom(\mathcal{E}_0)$, in particular, we have
\begin{equation}\delta_1^m(d_0f_0)^m(x)=\frac{3}{2}5^m\sum_{[x,y]\in E_1^m}(f_0(x)-f_0(y))=-\Delta_0^m f^m_0(x)\end{equation}Here $\Delta_0^m$ is both the graph Laplacian on $G_m$ and the m-level approximation for the Kigami Laplacian on SG. Thus $f_0\in dom(\Delta_0)$ if and only if (4.10) converges uniformly  to a continuous function on SG. So $d_0dom(\mathcal{E}_0)$ is a space on which $\delta_1$ is well-defined and $\delta_1 d_0=-\Delta_0$. Since the Laplacian for 0-forms in $dom(\Delta)$ agrees with the Kigami Laplacian, they have the same eigenvalues and eigenfunctions. Moreover, if $-\Delta_0 f_0=\lambda f_0$ for some $\lambda\neq 0$, then \[-\Delta_1 d_0f_0= -d_0\delta_1(d_0f_0)=\lambda d_0f_0\] implies that $d_0f_0$ is an eigenvector of $-\Delta_1$ with eigenvalue $\lambda$. That is, the entire spectrum of $-\Delta_0$ is replicated on the level of 1-form, with the exception of the zero eigenvalue.

Next we give a description of harmonic 1-forms $\mathcal{H}_1^m$ on graphs $G_m$. In particular we want to see how  $\mathcal{H}_1^m$ changes when $m$ varies and as $m\to\infty$.

First note that there is no nontrivial  harmonic 1-form on $G_0$, since the condition $d_1^0f_1^0=0$ says the sum of $f_1^0$ over the three edges is 0, and $\delta_1^0f_1^0=$ says $f_1^0$ takes the same value on each edge.

On $G_1$, we have nine equations, three of the form
\begin{equation}d_1^1f_1^1(e_2^1)=0,\end{equation}
and six of the form
\begin{equation}\delta_1^1f_1^1(e_0^1)=0.\end{equation}
But  there is redundancy since 
\begin{equation}\sum_{e_1^1\in E_0^1}\delta_1^1f_1^1(e_0^1)=0.\end{equation}Thus we have a 1-dimensional $\mathcal{H}_1^1$. It is generated by the  1-form $h$ shown in Figure (4.1).

\begin{figure}
\scalebox{0.5}{\includegraphics{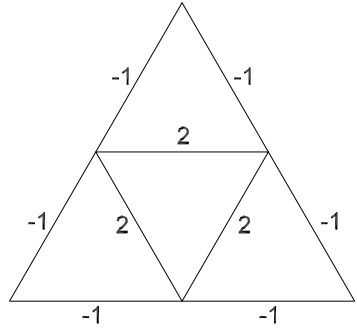}}\caption{$h$, a harmonic 1-form at level 1.}
\end{figure}
Direct computation verifies that $h_1$ is harmonic. We can also understand this 1-form as locally $d_0^1f_0^1$ where $f_0^1$ is a harmonic 0-form as in Figure (4.2):

\begin{figure}
\scalebox{0.5}{\includegraphics{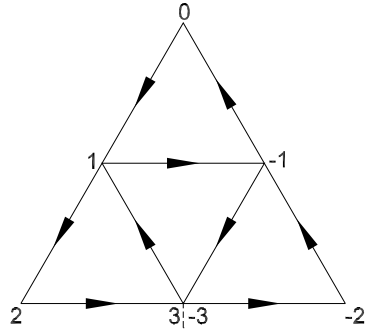}}\caption{$f_0^1$ such that $h=d_0f_0^1$.}
\end{figure}

Note that $f_0^1$ is not a well-defined function since it has ambiguous values at the bottom middle vertex. But one can consider $f_0^1$ as a harmonic mapping from $G_1$ to $\mathbb{R}/6\mathbb{Z}$ \cite{S2}. Locally $h_1=d_0^1f_0^1$ gives \[d_1^1h_1=d_1^1d_0^1f_0^1=0\]and\[\delta_1^1f_1^1=(\delta_1^1d_0^1)f_0^1=-\Delta_0^1f_0^1=0.\]

We can also use the local equality to define an extension to $G_2$, satisfying the extension property
\begin{equation*}h_1^1(e_1^1)=\sum_{e_1^2\subset e_1^1}h_1^2(e_1^2).\end{equation*} We first extend  harmonically $f_0^1$ to $f_0^2$ (Figure (4.3)) and compute $h_2=d_0^2f_0^2$ (Figure (4.4)).

\begin{figure}
\scalebox{0.5}{\includegraphics{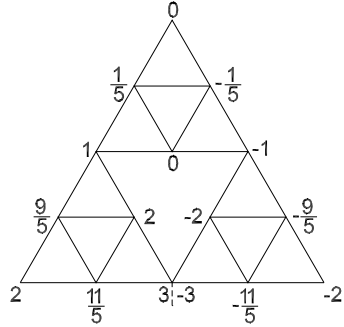}}\caption{$f_0^2$, the extension of $f_0^1$ to level 2}
\end{figure}

\begin{figure}
\scalebox{0.6}{\includegraphics{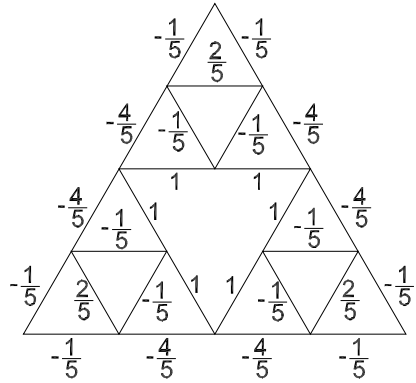}}\caption{$h_2=d_0^2f_0^2$.}
\end{figure}
More generally, we can define a local harmonic extension algorithm on any level and any cell. Suppose $f_1^m$ has values $x, y$ and $z$ on the three edges of an m-cell. Then $d_1^mf_1^m=0$ means $x+y+z=0$. When this m-cell is split into three (m+1)-cells, we extend $f_1^m$ as in Figure (4.5).

\begin{figure}
\scalebox{0.7}{\includegraphics{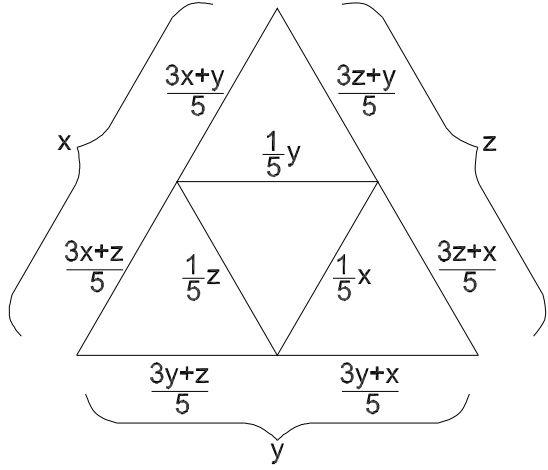}}\caption{Schematic local harmonic extension algorithm.}
\end{figure}

\begin{thm}Suppose $f_1^m$ is a harmonic 1-form on $G_m$, then $f_1^{m+1}$ derived from the local harmonic extension algorithm on each m-cell is a harmonic 1-form on $G_{m+1}$ and extends $f_1^m$ in the sense that
\begin{equation}f_1^m(e_1^{m})=\sum_{e_1^{m+1}\subset e_1^{m}}f_1^{m+1}(e_1^{m+1}).\end{equation}\end{thm}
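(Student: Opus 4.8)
The plan is to verify the two defining conditions for a harmonic 1-form on $G_{m+1}$, namely $d_1^{m+1}f_1^{m+1}=0$ on every $(m+1)$-cell (as well as every upside-down triangle) and $\delta_1^{m+1}f_1^{m+1}=0$ at every non-boundary vertex of $V_{m+1}$, together with the extension identity (4.15). Since the local harmonic extension algorithm of Figure (4.5) is applied independently inside each $m$-cell, everything can be checked cell-by-cell. Fix one $m$-cell with edge-values $x,y,z$ satisfying $x+y+z=0$ (this holds because $f_1^m$ is harmonic, so $d_1^m f_1^m=0$ on that cell). The figure prescribes the nine values $f_1^{m+1}$ on the nine edges of the three $(m+1)$-subcells as explicit linear combinations of $x,y,z$.

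First I would check (4.15). Each edge $e_1^m$ of the $m$-cell is subdivided into two edges of $G_{m+1}$, and I would read off from Figure (4.5) that the two values assigned to those halves sum to the original value ($x$, $y$, or $z$ respectively); this is a finite check of three linear identities. Next I would verify $d_1^{m+1}f_1^{m+1}=0$ on each of the three ``right-side-up'' $(m+1)$-subcells and on the central ``upside-down'' $(m+1)$-triangle: in each case this is the statement that the (signed) sum of $f_1^{m+1}$ over the three bounding edges vanishes, again a finite linear check against the prescription in the figure, and by construction the algorithm is designed to make these hold. Then I would verify $\delta_1^{m+1}f_1^{m+1}=0$ at the three ``new'' vertices introduced at level $m+1$ inside the cell (the midpoints of the three original edges); by formula (4.9) this is, up to the normalization factor $\tfrac32 5^{m+1}$, the signed sum of the four edge-values meeting at such a vertex, and this must be checked to vanish for each of the three new vertices.

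Finally — and this is the step I expect to be the main subtlety rather than the main difficulty — I would handle the ``old'' vertices of $V_m$ that survive into $V_{m+1}$. At such a vertex $v$, the quantity $\delta_1^{m+1}f_1^{m+1}(v)$ is a signed sum over the edges of $G_{m+1}$ incident to $v$, i.e.\ over the half-edges pointing into $v$ from each $m$-cell containing $v$. Using (4.15) (the piece just proved), the contribution of each incident $m$-cell to this sum equals the corresponding contribution of the original edge-values to $\delta_1^m f_1^m(v)$, provided the local algorithm assigns to the half-edge at $v$ the same value as the original edge —or at least a value with the same contribution after the $5$-versus-$5^{m+1}$ renormalization is taken into account. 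Summing over all $m$-cells meeting $v$ and invoking $\delta_1^m f_1^m(v)=0$ (harmonicity of $f_1^m$) then gives $\delta_1^{m+1}f_1^{m+1}(v)=0$. The point requiring care is the bookkeeping of orientations and of the renormalization constant between levels $m$ and $m+1$ (the factor $\tfrac53$ per level built into $\mu_1$ in (4.1)); once that is pinned down, the inner-cell checks from the figure plus the telescoping argument at old vertices complete the proof. I would also remark that the alternative viewpoint $h=d_0 f_0$ for a (multivalued) harmonic $0$-form, used earlier in the section, gives a conceptual shortcut: locally $f_1^{m+1}=d_0^{m+1}f_0^{m+1}$ with $f_0^{m+1}$ the harmonic extension of the local potential $f_0^m$, whence $d_1^{m+1}f_1^{m+1}=d_1^{m+1}d_0^{m+1}f_0^{m+1}=0$ by Theorem 2.1 and $\delta_1^{m+1}f_1^{m+1}=-\Delta_0^{m+1}f_0^{m+1}=0$ by harmonicity of the extension, which disposes of all the interior conditions at once and leaves only the matching of potentials across cells (equivalently, the old-vertex condition) to check by hand.
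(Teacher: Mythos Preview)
Your plan is essentially the paper's own proof: verify the extension identity by direct computation, check $d_1^{m+1}f_1^{m+1}=0$ on each $(m+1)$-cell from the explicit values in Figure (4.5), and split the $\delta_1^{m+1}$ condition into new vertices (a local linear identity) versus old vertices (reduction to $\delta_1^m f_1^m=0$). Two small corrections to your write-up: first, the central upside-down triangle is \emph{not} an element of $E_2^{m+1}$ (the paper explicitly excludes it), so no $d_1$ condition is imposed there and that check should be dropped; second, at an old vertex the half-edge value is \emph{not} equal to the original edge value, nor is it a matter of the $5^m$ normalization---the paper's actual computation is that the unnormalized signed sum at $v$ scales by exactly $\tfrac{3}{5}$, i.e.\ $\delta_1^{m+1}f_1^{m+1}(v)=\tfrac{3}{5}\,\delta_1^{m}f_1^{m}(v)$, and this per-cell scaling is what makes the reduction work.
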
 

\begin{proof}(4.14) is clear from direct computation. The condition $d_1^{m+1}f_1^{m+1}=0$ follows from $\frac{3x+y}{5}+\frac{3z+y}{5}+\frac{y}{5}=\frac{3}{5}(x+y+z)=0$ etc. Both use the fact that $d_1^m f_1^m=0$ implies $x+y+z=0$.

To show $\delta_1^{m+1}f_1^{m+1}=0$, we need different arguments for new vertices in $G_{m+1}$ and the old vertices in $G_m$. For the new vertices one has $\frac{3x+y}{5}-\frac{3x+z}{5}+\frac{z}{5}-\frac{y}{5}=0$ etc. For the old vertices we need to use $\delta_1^m f_1^m=0$ at the same vertex and the fact that $\delta_1^{m+1}f_1^{m+1}=\frac{3}{5}\delta_1^mf_1^m=0$.
\end{proof}

We can now explicitly describe a basis for $\mathcal{H}_1^m$. There will be one basic element associated to each k-cell in $G_m$ for each $k<m$. We take the original harmonic 1-form $h$ and miniaturize to the cell and then extend harmonically. If the cell is $F_{\omega}G_{m-k}$ ($|\omega|=k$), then 
\begin{equation*}h_{\omega}(e)=\begin{cases}h\circ F_{\omega}^{-1}(e)\ (e\in F_{\omega}E_1^1)\\0 \ (elsewhere)\end{cases}\end{equation*} 
is a harmonic 1-form on $G_{k+1}$. The only nontrivial property to check is $\delta_1^{k+1}h_{\omega}=0$ on $F_{\omega}E_0^0$ but there $\delta_1^{k+1}h_{\omega}=-1-(-1)+0-0=0$. We then extend by the harmonic extension algorithm to a harmonic 1-form on $G_m$. In this way we obtain $1+3+3^2+\cdots+3^{m-1}=\frac{3^m-1}{2}$ harmonic 1-forms. This equals the dimension of $\mathcal{H}_1^m$ since
$\# E_1^m=3^{m+1}$ is the dimension of $D_1^m$, and there are $3^m$ equations for $d_1^mf_1^m=0$, $\frac{3^{m+1}+3}{2}$ equations for $\delta_1^mf_1^m=0$ with one redundancy. Therefore it suffices to check the linear independence for the harmonic 1-forms.  Actually we will show that they are orthogonal with respect to the inner product $<,>_1^m$.

\begin{thm}$<h_{\omega},h_{\omega'}>_1^m=0$ if $\omega\neq \omega'$.\end{thm}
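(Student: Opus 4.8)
The plan is to reduce the statement to the Hodge orthogonality $\mathcal H_1^m\perp d_0D_0^m$ on $G_m$ (Theorem 2.3), after localising to the smaller of the two cells. By symmetry assume $|\omega|\le|\omega'|$. First note that the local harmonic extension algorithm never creates values outside the cell in which it is applied, and $h_\eta$ starts at level $|\eta|+1$ supported on $F_\eta E_1^1$; hence for each $\eta$ the level-$m$ form $h_\eta$ is supported on edges of the subgraph $F_\eta G_{m-|\eta|}\subseteq F_\eta\mathcal K$.

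If $\omega$ is not an initial segment of $\omega'$ (in particular whenever $|\omega|=|\omega'|$), the cells $F_\omega\mathcal K$ and $F_{\omega'}\mathcal K$ meet in only finitely many points and so share no edge of $G_m$; by the previous remark $h_\omega$ and $h_{\omega'}$ then have disjoint supports and $\langle h_\omega,h_{\omega'}\rangle_1^m=0$ term by term.

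Now suppose $\omega$ is a proper initial segment of $\omega'$, so $F_{\omega'}\mathcal K\subseteq F_\omega\mathcal K$ and $h_{\omega'}$ is supported on edges inside $F_{\omega'}\mathcal K$. The key claim is that the restriction of $h_\omega$ to $F_{\omega'}\mathcal K$ is exact, that is, $h_\omega=d_0g_\omega$ on $F_{\omega'}\mathcal K\cap V_m$ for a genuine single-valued function $g_\omega$. Granting this, extend $g_\omega$ arbitrarily to a function $\tilde g_\omega$ on all of $V_m$; since $d_0\tilde g_\omega$ depends only on endpoint values it agrees with $h_\omega$ on every edge contained in $F_{\omega'}\mathcal K$, hence on every edge where $h_{\omega'}$ is nonzero, so
\begin{equation*}
\langle h_\omega,h_{\omega'}\rangle_1^m=\langle d_0\tilde g_\omega,h_{\omega'}\rangle_1^m=\langle\tilde g_\omega,\delta_1h_{\omega'}\rangle_0^m=0,
\end{equation*}
the middle step being the duality (2.5) defining $\delta_1$ and the last one the harmonicity $\delta_1h_{\omega'}=0$.

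It remains to prove exactness, which is the heart of the matter. Since $H_1$ of the graph $F_{\omega'}G_{m-|\omega'|}$ is generated by its upside-down triangles, and closedness $d_1h_\omega=0$ disposes of the null-homologous cycles, it suffices to show $\int_\gamma h_\omega=0$ for the boundary $\gamma$ of each upside-down triangle inside $F_{\omega'}\mathcal K$; such a $\gamma$ is the central hole $\gamma_\eta$ produced when a cell $F_\eta\mathcal K$, with $\eta$ a proper extension of $\omega'$ (hence of $\omega$), is subdivided. I claim that one step of the local harmonic extension algorithm applied to a form with $x+y+z=0$ around a cell yields a form with vanishing holonomy around that cell's new central hole: by the extension property the two half-edges of each outer edge sum to the old value, so the six outer half-edges total $x+y+z=0$; each of the three new corner subcells contains one hole-edge and two outer half-edges, and its $d_1=0$ relation writes the hole-edge value as minus the sum of those two half-edges, so summing the three relations the three hole-edge values total $-(x+y+z)=0$, which equals, up to an overall sign, $\int_\gamma$ of the new form. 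Applying this to $h_\omega^{|\eta|}$ — harmonic on $G_{|\eta|}$ by iterating Theorem 4.1, hence closed on $F_\eta\mathcal K\in E_2^{|\eta|}$ — gives $\int_{\gamma_\eta}h_\omega^{|\eta|+1}=0$, and the extension property (4.14) makes $\int_{\gamma_\eta}h_\omega^{j}$ independent of $j\ge|\eta|+1$, so $\int_{\gamma_\eta}h_\omega=0$ at level $m$. The one hole not killed this way is $\gamma_\omega$ itself, but $\gamma_\omega$ lies in the middle of $F_\omega\mathcal K$ while $F_{\omega'}\mathcal K$ sits inside a corner subcell, so $\gamma_\omega\not\subseteq F_{\omega'}\mathcal K$ — this is exactly where $\omega\neq\omega'$ enters. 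The main obstacle is this holonomy bookkeeping; once the one-step computation and its stability under further subdivision are in hand, the rest is the soft Hodge-theoretic argument above.
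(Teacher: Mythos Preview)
Your proof is correct and takes a genuinely different route from the paper's. With the roles reversed (the paper sets things up so that $F_\omega\mathcal K$ is the \emph{smaller} cell, $|\omega|=k\ge|\omega'|$), the paper argues by direct computation: it first checks orthogonality at level $k+1$, where $h_\omega$ has just been born and is supported on nine edges, by showing that the sum of $h_{\omega'}$ over the three inner edges and over the six outer edges of $F_\omega\mathcal K$ each vanish separately (both are combinations of the $d_1$ and $\delta_1$ conditions on $h_{\omega'}$); it then proves the scaling identity $\langle f_1^{k+1},g_1^{k+1}\rangle_1^{k+1}=\tfrac{3}{5}\langle f_1^k,g_1^k\rangle_1^k$ for harmonic extensions, which propagates orthogonality to all higher levels. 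Your argument instead shows that the larger-cell form is \emph{exact} on the smaller cell (its holonomy around every hole there vanishes, via the one-step central-hole computation --- essentially the content of the subsequent Lemma~4.1) and then invokes the Hodge orthogonality $\mathcal H_1^m\perp d_0D_0^m$ from Theorem~2.3. The paper's route yields the quantitative identity (4.15) as a byproduct, useful for norm computations; yours gives a cleaner conceptual picture that would carry over to other PCF examples without redoing a nine-term calculation. One minor slip: the holes of $F_{\omega'}G_{m-|\omega'|}$ are the $\gamma_\eta$ for $\eta$ an extension of $\omega'$ \emph{including} $\eta=\omega'$ itself, not only proper extensions; but since $\omega'$ is already a proper extension of $\omega$, your holonomy argument applies to $\gamma_{\omega'}$ as well, so nothing is lost.
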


\begin{proof}If the cells associated with $\omega$ and $\omega'$ are disjoint then the two 1-forms are trivially orthogonal since they have disjoint support. Thus we might as well assume the $\omega$ cell is contained in the $\omega'$ cell with $|\omega|=k$.

First we claim $<h_{\omega},h_{\omega'}>_1^{k+1}=0$. In this case there are only nine edges in $E_1^{k+1}$ on which $h_{\omega}$ is nonzero, the three inner edges, where $h_{\omega}=2$ and the six outer edges where $h_{\omega}=-1$. Then it suffices to show that the contributions to the inner product from each of these two types of edges is separately 0. For this it suffices to show that the sum of $h_{\omega'}$ over each type of edges vanishes. The sum over the six outer edges is exactly $\delta_1^k h_{\omega'}$, which vanishes. But the sum over the three inner edges is the difference between the sum of $\delta_1^{k+1}h_{\omega'}$ on each of the three (k+1)-cells that comprise the k-cell, and $\delta_1^k h_{\omega'}$ on the k-cell, yielding $0-0=0$.

To complete the proof it suffices to show that orthogonality is inherited by harmonic extensions. In fact we will show
\begin{equation}<f_1^{k+1},g_1^{k+1}>_1^{k+1}=\frac{3}{5}<f_1^k,g_1^k>_1^k.\end{equation} for any harmonic 1-forms $f_1^{k},g_1^{k}$ and their harmonic extensions $f_1^{k+1},g_1^{k+1}$. We compute the contribution to the inner product from each k-cell, which is
\begin{align*}(\frac{3x+y}{5})(\frac{3x'+y'}{5})+(\frac{3x+z}{5})(\frac{3x'+z'}{5})+(\frac{3y+z}{5})(\frac{3y'+z'}{5})+(\frac{3y+x}{5})(\frac{3y'+x'}{5})\\+(\frac{3z+x}{5})(\frac{3z'+x'}{5})
 +(\frac{3z+y}{5})(\frac{3z'+y'}{5})+\frac{x}{5}\frac{x'}{5}+\frac{y}{5}\frac{y'}{5}+\frac{z}{5}\frac{z'}{5}\\
 =\frac{3}{5}(xx'+yy'+zz')\end{align*}

\end{proof}

It is perhaps more natural to describe a basis of harmonic 1-forms in terms of integrals over 1-cycles that give a basis for homology up to a level $m$. If we denote by $\gamma$ the central nontrivial cycle in $\Gamma_1$ (the upside-down triangle), then the homology basis consists of 1-cycles $F_{\omega}\gamma$ for $|\omega|<m$.

\begin{lem}For any $\omega$ and $\omega'$,
\begin{equation}\int_{F_{\omega'}\gamma}h_{\omega}=\begin{cases}6\  (\omega'=\omega)\\-2\ (|\omega'|<|\omega|,F_{\omega}\mathcal{K}\cap F_{\omega'}\gamma\neq\emptyset)\\0\  (otherwise)\end{cases}\end{equation}

For fixed $\omega'$ the condition $F_{\omega}\mathcal{K}\cap F_{\omega'}\gamma\neq\emptyset$ occurs exactly when $\omega_j=\omega'_j$ for $j<m'=|\omega'|$, and $\omega_j\neq\omega_{m'+1}$ for $j>m'+1$. In particular for fixed $\omega$, the integral is zero for all $\omega'$ such that $|\omega'|>|\omega|$ and the integral is nonzero for at most 4 choices of $\omega'$.\end{lem}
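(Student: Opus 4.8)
The plan is to use the self-similarity of both sides of the pairing. By construction $h_\omega$ is, at every level $m$, supported on the edges of $G_m$ lying inside the cell $F_\omega\mathcal K$ --- miniaturizing and the harmonic extension algorithm never spread the support outside the cell --- and by the extension property (4.14) the pairing $\int_{F_{\omega'}\gamma}h_\omega$ is independent of the level at which $h_\omega$ is evaluated. Moreover, since the harmonic extension algorithm is the same local rule applied to every cell, $h_\omega$ restricted to $F_\omega\mathcal K$ is the $F_\omega$-pushforward of the global harmonic $1$-form $h_\emptyset$ (the case $\omega=\emptyset$). So everything reduces to locating $F_{\omega'}\gamma$ relative to $F_\omega\mathcal K$ and then two explicit evaluations.

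First I would dispose of the vanishing cases with the intersection property (3.1). The three edges of $F_{\omega'}\gamma$ are curves interior to the cell $F_{\omega'}\mathcal K$, joining points of $F_{\omega'}(V_1\setminus V_0)$ and avoiding $F_{\omega'}V_0$. Hence if $\omega$ and $\omega'$ disagree within their first $\ell=\min(|\omega|,|\omega'|)$ letters, then $F_{\omega'}\gamma$ and $F_\omega\mathcal K$ lie in distinct $\ell$-cells, meet in a finite set, share no subedge, and the pairing is $0$. Only the cases where one of $\omega,\omega'$ is a prefix of the other survive.

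Next I would peel off the common prefix. If $\omega$ is a prefix of $\omega'$, say $\omega'=\omega\tau$, the equivariance gives $\int_{F_{\omega'}\gamma}h_\omega=\int_{F_\tau\gamma}h_\emptyset$; if $\omega'$ is a prefix of $\omega$, say $\omega=\omega'\sigma$, it gives $\int_{F_{\omega'}\gamma}h_\omega=\int_\gamma h_\sigma$. Two base evaluations then finish. For $\int_{F_\tau\gamma}h_\emptyset$ with $|\tau|\geq1$: $F_\tau\gamma$ is the central upside-down triangle obtained by subdividing the $2$-cell $F_\tau\mathcal K$ once, and since $h_\emptyset$ is harmonic on $G_{|\tau|}$ its three values $x,y,z$ on the edges of $F_\tau\mathcal K$ satisfy $x+y+z=0$; the extension algorithm (Figure (4.5), as in the proof of Theorem 4.1) assigns the edges of $F_\tau\gamma$ the values $\tfrac x5,\tfrac y5,\tfrac z5$, so the integral is $\tfrac15(x+y+z)=0$. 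For $\int_\gamma h_\sigma$: when $\sigma=\emptyset$ it is $\int_\gamma h=2+2+2=6$, since $h$ is $2$ on each of the three inner edges of $G_1$ (this also covers $\omega'=\omega$). When $|\sigma|\geq1$, write $\sigma=\sigma_1\sigma'$; then $h_\sigma$ is supported in $F_{\sigma_1}\mathcal K$, whose intersection with $\gamma$ is the single edge $F_{\sigma_1}[q_a,q_b]$ with $\{a,b\}=\{1,2,3\}\setminus\{\sigma_1\}$, so pulling back by $F_{\sigma_1}$ gives $\int_\gamma h_\sigma=\int_{[q_a,q_b]}h_{\sigma'}$. This last integral, of a miniaturized harmonic form over a straight side of $G_0$, I would handle by a one-step self-similar recursion in $|\sigma'|$: peeling off the leading letter $c$, if $c\notin\{a,b\}$ then $F_c\mathcal K$ misses the segment and the integral is $0$, while if $c\in\{a,b\}$ then $F_c\mathcal K\cap[q_a,q_b]=F_c[q_a,q_b]$ and pulling back by $F_c$ removes the letter; the recursion bottoms out at $\int_{[q_a,q_b]}h_\emptyset=(-1)+(-1)=-2$, since the two level-$1$ subedges of that side are outer edges of $G_1$. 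So $\int_{[q_a,q_b]}h_{\sigma'}$ is $-2$ if $\sigma'$ uses only the letters $a,b$ and $0$ otherwise.

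Assembling: the pairing is $6$ iff $\omega'=\omega$; it is $0$ whenever $|\omega'|>|\omega|$; and when $\omega'$ is a proper prefix of $\omega$, writing $\omega=\omega'\sigma_1\sigma'$, it is $-2$ exactly when $\sigma'$ avoids $\sigma_1$ --- equivalently $\omega_j\neq\omega_{m'+1}$ for all $j>m'+1$, which one checks is precisely the condition $F_\omega\mathcal K\cap F_{\omega'}\gamma\neq\emptyset$ --- and $0$ otherwise; the ``at most $4$'' count is then a short combinatorial observation on how many prefixes of a fixed word can have a removed tail whose first letter does not recur. I expect the only real nuisance to be orientation bookkeeping: keeping the cyclic orientation of each $F_{\omega'}\gamma$ consistent with the fixed edge orientations so that the three contributions in $x+y+z$, in $\tfrac x5+\tfrac y5+\tfrac z5$, and in $(-1)+(-1)$ genuinely add rather than cancel, so the constants come out as exactly $6$, $0$, and $-2$. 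Everything else is routine self-similar recursion once the localization of the first step is in hand.
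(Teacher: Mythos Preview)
Your argument is correct and follows essentially the same route as the paper: localization/equivariance to reduce to $h_\emptyset$, the harmonic extension rule (your $\tfrac{x}{5}+\tfrac{y}{5}+\tfrac{z}{5}$ observation, which the paper packages as $\int_{F_{\omega'}\gamma}h=-\tfrac{1}{5}\int_{\partial F_{\omega'}\mathcal K}h$) combined with $d_1h=0$ for the vanishing, and direct evaluation $(-1)+(-1)=-2$ on an outer side for the remaining case. The only cosmetic difference is that for the $-2$ case the paper simply asserts that exactly one side of $\partial F_\omega\mathcal K$ lies on $F_{\omega'}\gamma$ and reads off the value, whereas your letter-peeling recursion on $\int_{[q_a,q_b]}h_{\sigma'}$ unpacks that same geometric fact and has the mild advantage of deriving the combinatorial condition $\omega_j\neq\omega_{m'+1}$ for $j>m'+1$ along the way rather than stating it separately.
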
 

\begin{proof}From the harmonic extension algorithm it is clear that for any harmonic 1-form
\begin{equation*}\int_{F_{\omega'}\gamma}h=-\frac{1}{5}\int_{\partial F_{\omega'}\mathcal{K}}h\end{equation*} (the minus sign comes from reversed orientation).

For the original harmonic 1-form we have $\int_{\partial F_{\omega'}\mathcal{K}}h=0$ for any nonempty word $\omega'$, which is exactly the condition $d_1^{m'}h=0$, while $\int_{\partial \mathcal{K}}h=6$ by inspection. This verifies (4.16) for $\omega=\emptyset$.More generally, we can localize this argument to obtain the first and third line of (4.16) for any $\omega$.

If $|\omega'|<|\omega|$ the  we can only get a nonzero integral if $F_{\omega}\mathcal{K}\cap F_{\omega'}\gamma\neq\emptyset$, in which case the edge of $\partial F_{\omega}\mathcal{K}$ that meets $F_{\omega'}\gamma$ contributes $-2$ to the integral over $F_{\omega'}\gamma$, and elsewhere on $F_{\omega'}\gamma$, $h_{\omega}$ is zero.  Since, for fixed $\omega$, $\partial F_{\omega}\mathcal{K}$ has only three edges, there are at most three choices of $\omega'$  where we get integral $-2$ (we have fewer choices if some of the edges of $\partial F_{\omega}\mathcal{K}$ lie on $\partial\mathcal{K}$). On the other hand, if we fix $\omega'$, then $F_{\omega}\mathcal{K}$ can intersect $F_{\omega'}\gamma$ only if $F_{\omega}\mathcal{K}\subset F_{\omega'}\mathcal{K}$, so we must have $\omega_j=\omega'_{j}$ for $j\le m'$. We can then allow any choice of $\omega_{m'+1}$, after which we can never choose that symbol again.
\end{proof}

Using (4.16) it is easy to set up an inductive procedure to produce a new basis $\widetilde{h}_{\omega}$ of harmonic 1-forms that satisfies
\begin{equation}\int_{F_{\omega'}\gamma}\widetilde{h}_{\omega}=\delta_{\omega,\omega'}.\end{equation}
Then\begin{equation}\widetilde{h}=\sum c_{\omega}\widetilde{h}_{\omega}\end{equation} produces a harmonic 1-form with
\begin{equation}\int_{F_{\omega'}\gamma}\widetilde{h}=c_{\omega'}.\end{equation}
for any values $c_{\omega'}$, only finite of which are nonzero. This implements the homology/ cohomology duality between harmonic 1-forms and 1-chains at finite levels.

Next we consider passing to the limit to obtain 1-forms on SG. It is clear that the harmonic extension algorithm may be used infinitely often to obtain the values on all edges in $\cup_{m=0}^{\infty}E_1^m$ for 1-forms. We claim then on each edge we have a finite measure. Since each of the harmonic 1-forms is locally $d_0f_0$ for some harmonic function $f_0$, this is equivalent to the statement that the restriction of a harmonic function to an edge is of bounded variation. But in \cite{DVS} it is shown that the restriction of a harmonic function to an edge is either monotonic, or has a single maximum or minimum, and monotonic functions are of bounded variation. We also observed that $<h_{\omega},h_{\omega}>_1$ is finite.

However, if we only assume $f_0$ has finite energy then we cannot conclude $d_0f_0$ restricted to an edge is a measure. In other words, the restriction of $f_0$ to an edge may not be of bounded variation. For example, consider $f_0^m$ that vanishes everywhere except on a boundary edge, where it oscillates between $\pm 2^{-m/2}$. Then $\mathcal{E}_0^m(f_0^m)\le 4\cdot 2^m(2^{-m/2})^{2}=4$ and harmonic extension of $f_0$ will have energy no larger than 4. But the variation of $f_0^m$ along the boundary edge is at least $2\cdot 2^{-m/2}\cdot 2^m=2\cdot 2^{m/2}$. So there is no estimate of total variation in terms of energy. It is then a routine matter to produce a counterexample by taking an appropriate infinite series of localization of these functions.

\begin{thm}Assume $f_0\in dom(\Delta)$, then the restriction of $f_0$ to any edge is of bounded variation, thus $d_0f_0$ is a 1-form (its restriction to every edge is a measure).\end{thm}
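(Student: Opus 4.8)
The plan is to split $f_0$ into a harmonic part and a Green potential, dispose of the harmonic part by the cited structure of harmonic functions on $\mathrm{SG}$, and control the potential by a self-similar bootstrap. It is enough to treat the case that the edge is one of the three sides of $G_0$: every edge in $\bigcup_m E_1^m$ has the form $F_\omega(e)$ with $e$ a side of $G_0$, the total variation along $F_\omega(e)$ of $f_0$ equals the total variation along $e$ of $f_0\circ F_\omega$, and $-\Delta_0(f_0\circ F_\omega)=5^{-|\omega|}\,(-\Delta_0 f_0)\circ F_\omega$ is again continuous, hence bounded, so $f_0\circ F_\omega$ belongs to the same class. Fix then the bottom side $L$ of $\mathrm{SG}$, put $g=-\Delta_0f_0$, and write $f_0=h+u$, where $h$ is the harmonic function with $h|_{V_0}=f_0|_{V_0}$ and $u$ is the Green potential of $g$, i.e.\ the unique function with $-\Delta_0u=g$ and $u|_{V_0}=0$; by Kigami's theory \cite{Ki} the Green operator $G$ is bounded on $C(\mathrm{SG})$, so $\|u\|_\infty\le A_0\|g\|_\infty$ with $A_0<\infty$. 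By \cite{DVS} the restriction $h|_L$ is monotone or has a single interior extremum, and since a harmonic function attains its extrema on $V_0$, this gives $\mathrm{Var}_L(h)\le 2\,\mathrm{osc}_{V_0}(h)<\infty$. Thus it remains to bound $\mathrm{Var}_L(u)$.

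For the potential I would use self-similarity. Let $F_a,F_b$ be the two similarities fixing the endpoints $q_a,q_b$ of $L$, so $F_aL$ and $F_bL$ are the two halves of $L$. The scaling $-\Delta_0(u\circ F_j)=\tfrac15\,g\circ F_j$ gives, on the cell $F_j\mathrm{SG}$, a decomposition $u\circ F_j=H_j+\tfrac15\,G(g\circ F_j)$ with $H_j$ harmonic and $H_j|_{V_0}=u|_{F_jV_0}$. Write $T_m(L,\phi)=\sum_{\omega\in\{a,b\}^m}|\phi(F_\omega q_b)-\phi(F_\omega q_a)|$ for the level-$m$ dyadic approximation of the variation of $\phi$ along $L$; this is increasing in $m$ with $\sup_m T_m(L,\phi)=\mathrm{Var}_L(\phi)$ when $\phi$ is continuous. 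Then $T_{m+1}(L,u)=T_m(L,u\circ F_a)+T_m(L,u\circ F_b)$, and each summand is at most $\mathrm{Var}_L(H_j)+\tfrac15\,T_m(L,G(g\circ F_j))\le 2\,\mathrm{osc}_{V_0}(H_j)+\tfrac15\,T_m(L,G(g\circ F_j))$. Here $\mathrm{osc}_{V_0}(H_j)=\mathrm{osc}(u|_{F_jV_0})\le 2\|u\|_\infty\le 2A_0\|g\|_\infty$, so the only ``dangerous'' term is the factor $\tfrac15$ in front of the potential, summed over the two subcells meeting $L$ --- and $\tfrac25<1$.

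To turn this into a bound I would run the recursion on finite truncations. Set $B_m=\sup\{\,T_m(L,Gv):v\in C(\mathrm{SG}),\ \|v\|_\infty\le1\,\}$; this is finite for every $m$ (crudely $B_m\le 2^{m+1}A_0$) and $B_0=0$ since $Gv$ vanishes on $V_0$. Applying the previous paragraph with $v$ in place of $g$ (using $\|v\circ F_j\|_\infty\le\|v\|_\infty\le1$) yields $B_{m+1}\le 8A_0+\tfrac25 B_m$, whence by induction $B_m\le\tfrac{40A_0}{3}$ for all $m$; therefore $B:=\sup_mB_m<\infty$ and $\mathrm{Var}_L(Gv)\le B\|v\|_\infty$ for every continuous $v$. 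Taking $v=g$ gives $\mathrm{Var}_L(u)\le B\|g\|_\infty<\infty$, so $f_0|_L$, and hence the restriction of $f_0$ to any edge, has bounded variation; consequently $d_0f_0$ restricts to a finite signed measure on every edge, i.e.\ it is a $1$-form.

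The step I expect to be the main obstacle is this last one: the recursion only estimates $\mathrm{Var}_L(u)$ in terms of itself, so one cannot simply cancel and must instead iterate the already-finite quantities $T_m$ and exploit the contraction factor $\tfrac25<1$. Relatedly, the reason one is forced through the harmonic/potential splitting is that the direct recursion for $f_0$ does not contract: the new midpoint value on $L$ is a convex combination of the two endpoint values on $L$ and the apex value $f_0(D_j)$ off $L$, plus an error $O(5^{-m})$, and the accumulated transverse contributions $\sum_j|f_0(D_j)-f_0(x_j)|$ over the $2^m$ base cells along $L$ are bounded only by $2^m$ times the Hölder estimate $(3/5)^{m/2}\cdot\mathrm{const}$, which diverges; subtracting off the harmonic part is precisely what removes those terms. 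The remaining ingredients --- the $L^\infty$-boundedness of $G$, the scaling $-\Delta_0(w\circ F_j)=\tfrac15(-\Delta_0w)\circ F_j$, and the monotone/single-extremum behaviour of harmonic functions on edges \cite{DVS} --- are standard in Kigami's framework.
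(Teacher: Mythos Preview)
Your argument is correct and shares its opening moves with the paper's proof: both localize to a boundary edge of $G_0$, split $f_0$ into a harmonic part plus a Green potential, and dispatch the harmonic part via the monotone/single-extremum result of \cite{DVS}. The difference lies in how the potential $u=G(-\Delta_0 f_0)$ is handled. The paper invokes Kigami's explicit series $G(x,y)=\sum_m\sum_{|\omega|=m}(3/5)^m\Psi(F_\omega^{-1}x,F_\omega^{-1}y)$ and observes that, for $x$ ranging along a fixed boundary edge and $y$ fixed, at most one word $\omega$ of each length contributes; since $\Psi$ is piecewise harmonic and hence BV on edges with variation unchanged under localization, this gives $\mathrm{Var}_L G(\cdot,y)\le c\sum_m(3/5)^m$ uniformly in $y$, and integrating against $-\Delta_0 f_0$ finishes. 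You instead run a self-similar bootstrap: the scaling $-\Delta_0(u\circ F_j)=\tfrac15(-\Delta_0 u)\circ F_j$ yields a recursion $B_{m+1}\le 8A_0+\tfrac25 B_m$ on the dyadic-variation suprema, and the contraction $\tfrac25<1$ (two sub-cells along $L$, each picking up a factor $\tfrac15$) closes the estimate. Your route avoids the explicit Green's function formula entirely, needing only the $L^\infty$ bound $\|Gv\|_\infty\le A_0\|v\|_\infty$ and the Laplacian scaling; this makes it more portable to other PCF fractals where the analogue of Kigami's series may be less transparent. The paper's route is shorter once the series is in hand and yields directly the pointwise-in-$y$ bound on $\mathrm{Var}_L G(\cdot,y)$, which is a slightly stronger statement and connects immediately to the remark following the theorem that $\Delta_0 f_0\in L^1$ already suffices.
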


\begin{proof}We have already observed that the statement is true of harmonic functions. By localization it suffices to prove this for one of the boundary edges in $E_1^0$. By subtracting a harmonic function we may reduce this to the case where $f_0$ vanishes at the boundary vertices. Then we may write
\begin{equation}f_0(x)=\int_{SG}G(x,y)(-\Delta f_0(y))dy\end{equation}
where $G$ is the Green's function. Thus it suffices to show the restriction of $G(\cdot,y)$ to the boundary edges for each $y$ is of bounded variation with total variation bounded by a constant  independent of $y$. This follows from Kigami's formula:
\begin{equation}G(x,y)=\sum_{m=0}^{\infty}\sum_{|\omega|=m}(\frac{3}{5})^m\Psi(F_{\omega}^{-1}x, F_{\omega}^{-1}y),\end{equation}
where $\Psi$ is an explicit function that is piecewise harmonic at level 1.

The key observation is that for each fixed $y$ there is at most one $\omega$ with $|\omega|=m$ for which $ \Psi(F_{\omega}^{-1}x, F_{\omega}^{-1}y)$ is not identically zero for all $x$ along a fixed boundary edge. Since $\Psi$ is piecewise harmonic, its restrictions are of bounded variation and the variation does not change when we localize. Thus we get an estimate of $c\sum (\frac{3}{5})^m$ for the variation of the restrictions, which is independent of $y$.
\end{proof}

In fact the argument works if we only assume $\Delta f_0$ exists in $L^1$ or $L^2$. A more challenging problem is to find the largest space between $dom(\mathcal{E})$ and $dom(\Delta)$ that yields the same conclusion. 

In Section 5 we will show that the restrictions of nonconstant harmonic functions are not absolutely continuous, thus the measures $d_0f_0$ on edges are singular with respect to the arc length measure. 

Another important question is under what conditions we can allow an infinite series in (4.18). It is simpler to consider the series 
\begin{equation}
h=\sum c_{\omega}h_{\omega},
\end{equation} 
since the condition
\begin{equation}
\sum(\frac{5}{3})^{|\omega|}|c_{\omega}|^2<\infty
\end{equation}
is natural if we want $<h,h>_1<\infty.$ But again we an show that (4.23) is not sufficient to make (4.22) converge on edges. To see this, consider a finite sum with $c_{\omega}=(\frac{3}{10})^{m/2}$ for $|\omega|=m$ with all $\omega_j=1$ or $2$ and $c_{\omega}=0$ otherwise. Since there are $2^m$ nonzero values we see (4.23) is $2^m\cdot(\frac{5}{3})^m\cdot(\frac{3}{10})^m=1$. However, if we choose $e_1$ to be the bottom boundary edge then $h(e_1)=2^m\cdot (\frac{3}{10})^{m/2}(-2)=-2(\frac{6}{5})^{m/2}$.

Next we discuss a theory of 2-forms on SG and the mappings $d_1$ from 1-forms to 2-forms and $\delta_2$ from 2-forms to 1-forms. The space of 2-forms will just be the space of finite signed measures on SG, but of course this is too large a space, and we want to identify a space of `smooth' measures for which $\delta_2$ may be defined as a `trace on line segments'. Let $\mu$ denote the standard balanced measure on SG, so $\mu$ assigns $1/3^m$ to each m-cell. If $f$ is a continuous function  on SG then $fd_\mu$ is a measure absolutely continuous with respect to $\mu$ with continuous Radon-Nykodim derivative. We could allow a somewhat larger class of Radon-Nykodim derivatives, but for now this will suffice. If $\sigma_1^m$ is any edge of level m, then 
\begin{equation}
\delta_2(fd\mu)(\sigma_1^m)=\int_{\sigma_1^m}f
\end{equation} 
is well-defined, and in fact\begin{equation}
\delta_2(fd\mu)(\sigma_1^m)=\lim_{n\to\infty}(\frac{3}{2})^n\sum_{\sigma_2^n\cap\sigma_1^m\neq\emptyset}\int_{\sigma_2^n}fd\mu,
\end{equation} 
which is a renormalized limit of the measure of a `thickening' $\cup_{\sigma_2^n\cap\sigma_1^m\neq\emptyset}\sigma_2^n$ of $\sigma_1^m$. 

Note that with this definition there are no nonzero harmonic 2-forms.

The corresponding definition for $d_1$ is
\begin{equation}
d_1f_1(\sigma_2^m)=\lim_{n\to\infty}(\frac{2}{3})^n\sum_{\sigma_1^n\subset\sigma_2^m}f_1(\sigma_1^n),
\end{equation} 
but the issue is what the space of 1-forms should be admissible in this definition. Note that if $f_1=d_0f_0$ then the sums on the right-hand side of (4.26) are always zero, so the limit exists (regardless of the renormalizing factor), and we have $d_1d_0=0$ and  $d_1h=0$ for harmonic 1-forms. On the other hand, if we assume that $f_1=\delta_2(fd\mu)$ for some continuous function $f$, then $d_1f_1$ exists and 
\begin{equation}
d_1\delta_2(fd\mu)=3fd\mu.
\end{equation} 
The factor 3 arises because each cell $\sigma_2^m$ has 3 edges $\sigma_1^m$ contained in it. So this yields the relatively trivial result that $\Delta_2=3I$, and $\Delta_1$ restricted to this class of 1-forms is also $3I$.

We conclude this section by introducing an alternative approach. Choose a finite measure $\nu$ on SG. In place of the inner product (4.2) on 0-forms we consider
\begin{equation}
<f_0,g_0>_0'=\int_{SG}f_0g_0d\nu.
\end{equation} 
This does not change the definition of $d_0$, but it will change the definition of $\delta_1$. In place of the weight $\mu_0^m(e_0^m)$ defined in (4.1) we use
\begin{equation}
\mu_0'(e_0^m)=\int_{SG}\Psi_{e_0^m}d\nu
\end{equation} 
where $\Psi_{e_0^m}$ denotes the piecewise harmonic spline on level m that assumes the value 1 at $e_0^m$ and the value 0 at all other $E_0^m$ vertices. Then in place of (4.9) we have
\begin{equation}
\delta_1'^{m}f_1^m(e_0^m)=\frac{(5/3)^m}{\int\Psi_{e_0^m}d\nu}\sum_{e_1^m\in E_1^m}sgn(e_0^m,e_1^m)f_1^m(e_1^m).
\end{equation} 
Thus $\delta_1'=\lim_{m\to\infty}\delta_1'^m$ when the limit exists, and $\Delta_0=\delta_1' d_0$. The point is that we can identify $\Delta_0'$ with Kigami's definition (\cite{Ki},\cite{SB}) of $\Delta_\nu$, namely,
\begin{equation}
\Delta_{\nu}f_0(e_0^n)=\lim_{m\to\infty}\delta_1'^md_0^mf_0(e_0^m) \ (uniform\ limit).
\end{equation} 
This does not change the definition of harmonic 1-forms, as they will also have $\delta_1'^mf_1=0$ for large enough m.

We would next want to consider the subspace of 2-forms of the form $fd\nu$ for continuous functions $f$. We would then need to replace the renormalization coefficients in (4.25) and (4.26) to yield the appropriate notion of `trace on the line segments' for the measures $fd\nu$. This of course depends on the measure $\nu$, and we do not know how to deal with the general case. What we can do is to describe the case of the Kusuoka measure. For any harmonic function $h$, define the energy measure $\nu_h$ by $\nu_h(C)=$ the energy of $h$ restricted to $C$ (it is obvious how to do the restriction when $C$ is a union of cells and this suffices for the definition of the measure). The Kusuoka measure $\nu=\nu_{h}+\nu_{h'}$, where $h$ and $h'$ is an orthonormal basis of harmonic functions modulo constants in the energy inner product (it is easy to see that the result is independent of the choice of orthonormal basis).

For simplicity we look at the case where the 2-form is $\nu_h$ for some harmonic function $h$, and $\sigma_1$ is the line segment joining $q_1$ and $q_2$. Then we need the growth rate of $\nu_h(\Omega_n)=\sum\nu_h(\sigma_2^n)$, where $\Omega_n=\cup F_{\omega}(SG)$, the union is over all words $\omega$ of length $n$ with $\omega_j=1$ or $2$, and the sum is over all $\sigma_2^n\cap\sigma_1\neq\emptyset$.

Now it happens that the value of $\nu(\Omega_n)$ was computed exactly in [OS] (based on results in [AHS]) to be of the form
\begin{equation}
a(\frac{17+\sqrt{73}}{30})^n+b(\frac{17-\sqrt{73}}{30})^n
\end{equation} 
where the constants $a$ and $b$ are explicitly determined by the values of $h$ on $E_0^1$.This means we want to define
\begin{equation}
\delta'_2(fd\nu)(\sigma_1^m)=\lim_{n\to\infty}(\frac{30}{17+\sqrt{73}})^n\sum_{\sigma_2^n\cap\sigma_1^m\neq\emptyset}\int_{\sigma_2^n}fd\nu.
\end{equation} 
It is easy to localize the above argument to show that the limit exists if $f$ is continuous. What is not clear  is how to characterize the space of 1-forms that we obtain, and how to define $d_1'$ on this space of 1-forms.

\section{Singularity of 1-form Measures}
In this section we examine more closely the nature of the measures on the line segments given by 1-forms of the type studied in Section 4. The main result is that $d_0f_0$ on a line segment for $f_0$ a nonconstant harmonic function is not absolutely continuous with respect to the Lebesgue measure. The same result follows immediately for harmonic 1-forms, since they are locally equal to $d_0f_0$. It should be straightforward to extend the result to $d_0f_0$ for $f_0\in dom(\Delta_0)$ since such functions may be well approximated by harmonic functions \cite{S}. A more difficult question we cannot answer is whether or not the measure is completely singular with respect to Lebesgue measure (has zero absolute continuous part).
\begin{thm}
Let $f_0$ be a nonconstant harmonic function, and $L$ any line segment. Then $d_0f_0|_{L}$ is not absolutely continuous with respect to Lebesgue measure on $L$.  
\end{thm}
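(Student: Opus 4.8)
The plan is to exploit the self-similar structure of the restriction of $f_0$ to $L$ together with the known behavior of harmonic functions along edges, and to derive a contradiction with absolute continuity via a Lebesgue-density / Radon--Nikodym argument. Without loss of generality take $L$ to be the bottom boundary edge $[q_1,q_2]$, and let $f_0$ be harmonic and nonconstant. The key structural fact is that the restriction $f_0|_L$ is self-affine: if $F_1,F_2$ are the two contractions fixing $q_1$ and $q_2$, then $L = F_1 L \cup F_2 L$, and on each half $f_0\circ F_i$ is again (the restriction to $L$ of) a harmonic function, whose boundary values on $\{q_1,q_2\}$ are determined from the boundary values of $f_0$ by the $2\times 2$ harmonic extension matrix $A_i$ (the upper-left block of the $3\times3$ extension matrix, after eliminating the apex value). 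Consequently the measure $d_0 f_0|_L$ splits as a sum of two rescaled copies of $d_0(f_0\circ F_i)|_L$ supported on $F_1 L$ and $F_2 L$, with the total masses being the two increments $f_0(\text{midpoint})-f_0(q_1)$ and $f_0(q_2)-f_0(\text{midpoint})$.

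First I would set up this renormalization precisely: writing $m_L(f_0) := d_0f_0(L) = f_0(q_2)-f_0(q_1)$, iterate the splitting to express, for each dyadic subinterval $L_w = F_w L$ of $L$ (here $w\in\{1,2\}^n$), the mass $d_0f_0(L_w)$ as a product of entries coming from the matrices $A_1,A_2$ applied to the initial boundary data. Meanwhile the Lebesgue measure of $L_w$ is exactly $2^{-n}|L|$. So absolute continuity with a density $g\in L^1(L)$ would force, by the Lebesgue differentiation theorem along the dyadic net, that $2^n\, d_0f_0(L_w) \to g$ a.e. as $n\to\infty$ (the dyadic intervals shrink nicely). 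The strategy is then to show this ratio cannot converge on a set of positive measure: the sequence of normalized increments $2^n d_0f_0(L_w)$ is governed by products of the matrices $2A_1,2A_2$ acting on the $\mathbb{R}^2$ vector of boundary increments, and one shows that the spectral/contraction data of these matrices makes the products oscillate (or tend to $0$, or blow up) on a.e. dyadic branch, rather than stabilize. Concretely, using the explicit SG harmonic extension matrices one computes the relevant eigenvalues and checks that $2A_1, 2A_2$ are not simultaneously "balanced" in a way compatible with a.e. convergence of the products — this is essentially the statement that the energy measure $\nu_{f_0}$ (whose comparison with $d_0f_0$ on $L$ is straightforward, since $d_0f_0(L_w)$ and $\nu_{f_0}(L_w)^{1/2}\cdot(\text{length})^{1/2}$-type bounds relate them) is singular with respect to Lebesgue measure on $L$, which in turn is a consequence of the known fact that $\log$ of the two contraction ratios along the edge are distinct, so the multiplicative ergodic theorem (Furstenberg / law of large numbers for the i.i.d. sequence of symbols under Lebesgue $=$ uniform Bernoulli measure) gives an a.e. exponential rate $2^{-n}\cdot 2^n d_0f_0(L_w) \sim e^{-cn}$ or $e^{+cn}$ with $c>0$, contradicting convergence to a finite nonzero density a.e., while convergence to $0$ a.e. is incompatible with the total mass $m_L(f_0)\neq 0$ being the sum of the masses.

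The cleanest route for the last step is: suppose $d_0f_0|_L = g\,d(\text{Leb}) + (\text{singular})$; on the absolutely continuous part pick a Lebesgue point $x$ of $g$ with $g(x)\neq 0$ lying on a dyadic branch $w_1w_2\cdots$; then $2^n d_0f_0(L_{w_1\cdots w_n})\to g(x)\neq 0$; but by the multiplicative structure and the fact that the two per-step factors have different magnitudes for generic boundary data, a Borel--Cantelli / SLLN argument shows that along a.e. branch (w.r.t. Lebesgue, i.e. the uniform Bernoulli measure on $\{1,2\}^{\mathbb N}$) the product $2^n d_0f_0(L_{w_1\cdots w_n})$ does not converge to a nonzero limit — it has a.s. exponential drift. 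Hence $g=0$ a.e., i.e. $d_0f_0|_L$ is purely singular, which in particular proves it is not absolutely continuous.

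The main obstacle I anticipate is the algebraic heart of the argument: verifying, from the explicit harmonic extension matrices for SG, that the two one-step transfer factors along the edge are genuinely of different size for every nonconstant choice of boundary data (equivalently, that the Lyapunov exponent of the matrix cocycle $(2A_1,2A_2)$ under the uniform Bernoulli measure is nonzero, or that the process $\log|2^n d_0f_0(L_{w})|$ has nonzero drift). This is where one must be careful about degenerate directions (boundary data for which $f_0|_L$ happens to be linear or monotone with a clean ratio) and about the possibility that $A_1$ and $A_2$ share an eigenvector; one needs to rule these out or handle them separately. The monotonicity/single-extremum structure of harmonic functions on edges from \cite{DVS} should be used to control the support and to guarantee $d_0f_0(L)\neq 0$ can be propagated to infinitely many sublevels, which is what pins down the contradiction.
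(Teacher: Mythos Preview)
Your approach is genuinely different from the paper's, and considerably more ambitious. The paper does not touch Lyapunov exponents or random matrix products at all; instead it works directly with the dyadic density approximants $g_m = \sum_j 2^m \nu(I_j^m)\chi_{I_j^m}$ and shows, by an explicit computation using the three-term edge extension rule (their Algorithm~(5.1)), that
\[
\|g_{m+1}-g_m\|_{L^1} \;\ge\; \tfrac{3}{25}\,\bigl|f_0(x(1))-f_0(x(0))\bigr|
\]
for every $m$. This immediately kills $L^1$-convergence of $(g_m)$, hence absolute continuity. The whole argument is a page of arithmetic with the coefficients $8/25$, $4/5$, $-3/25$ and one application of the triangle inequality; no ergodic theory enters.

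Your route, by contrast, aims at pure singularity of $d_0f_0|_L$, which is strictly stronger than the theorem and which the paper explicitly flags as open (see the first paragraph of Section~5). So either your argument has a gap or you are proving something new. The gap is exactly where you locate it: you never establish that the top Lyapunov exponent of the cocycle $(2A_1,2A_2)$ under Bernoulli$(1/2,1/2)$ is nonzero, nor that the products fail to converge a.e.\ to a finite nonzero limit. Without this, the SLLN/Furstenberg step is an assertion, not an argument. It is not automatic: one must rule out that the exponent vanishes, and the ``generic boundary data'' clause hides the issue that the initial vector is fixed by $f_0$, not random.

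There is also a structural inaccuracy in your setup. The edge recursion is \emph{not} obtained by taking a $2\times2$ block of the $3\times3$ harmonic extension matrix after ``eliminating the apex value''; the apex value genuinely enters. What is true (and what the paper exploits) is that the values of $f_0$ at \emph{three} consecutive dyadic points on $L$ determine the next level, via (5.1). If you want a matrix cocycle, the correct state is the pair of adjacent increments $(u,v)$, and the two transfer maps $T_1,T_2$ are derived from (5.1); they are not the blocks you describe. This is fixable, but it means the eigenvalue analysis you allude to has to be redone on the correct matrices before any Lyapunov claim can be checked.

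In short: the paper's proof is elementary and complete for the stated theorem; your proposal is a plausible outline for a stronger result, but the decisive step is left open, and the cocycle is mis-specified.
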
 

\begin{proof}
Let $x(t)$ denote the standard parametrization of $L$ for $0\le t\le1$. Then $f_0(x(t))$ satisfies the harmonic extension algorithm:
\begin{equation}
\begin{cases}&f_0(x(\frac{2j+1}{2^{m+1}}))=\frac{8}{25}f_0(x(\frac{j}{2^m}))+\frac{4}{5}f_0(x(\frac{j+1}{2^m}))-\frac{3}{25}f_0(x(\frac{j+2}{2^m}))\\
&f_0(x(\frac{2j+3}{2^{m+1}}))=\frac{8}{25}f_0(x(\frac{j+2}{2^m}))+\frac{4}{5}f_0(x(\frac{j+1}{2^m}))-\frac{3}{25}f_0(x(\frac{j}{2^m}))\end{cases}\end{equation}
for $j$ even (\cite{DVS} Algorithm 2.2). This allows us to pass from information at $t=\frac{j}{2^m}$ for all $j$ to information at $t=\frac{j}{2^{m+1}}$ for all $j$. Now if $\nu$ is a nonatomic measure on $[0,1]$, the dyadic approximations
\begin{equation}
g_m=\sum_{j=0}^{2^m-1}2^m\nu(I_j^m)\chi_{I_j^m}
\end{equation}  
where $I_j^m=[\frac{j}{2^m},\frac{j+1}{2^m}]$ determine whether or not $\nu$ is absolutely continuous, namely $\nu=gdt$ if and only if $g_m\to g$ in $L^1$. For our measure
\begin{equation}
\nu(I_j^m)=f_0(x(\frac{j+1}{2^m}))-f_0(x(\frac{j}{2^m})),
\end{equation} 
so we may use (5.1) to obtain a lower bound for $\|g_{m+1}-g_{m}\|_1$. We may write
\[g_m=\sum_{j=0}^{2^{m}-1}2^{m+1}\frac{\nu(I_{2j}^{m+1})+\nu(I_{2j+1}^{m+1})}{2}(\chi_{I_{2j}^{m+1}}+\chi_{I_{2j+1}^{m+1}}),\] so
\begin{equation}
\|g_{m+1}-g_{m}\|_1=\sum_{j=0}^{2^m-1}|\nu(I_{2j}^{m+1})-\nu(I_{2j+1}^{m+1})|.
\end{equation} 
So consider the contribution of the two consecutive terms in (5.4). For $j$ even we have
\begin{align*}|\nu(I_{2j}^{m+1})&-\nu(I_{2j+1}^{m+1})|+|\nu(I_{2j+2}^{m+1})-\nu(I_{2j+3}^{m+1})|\\
&=|2f_0(x(\frac{2j+1}{2^{m+1}}))-f_0(x(\frac{j}{2^m}))-f_0(x(\frac{j+1}{2^m}))|\\&+|2f_0(x(\frac{2j+3}{2^{m+1}}))-f_0(x(\frac{j+1}{2^m}))-f_0(x(\frac{j+2}{2^m}))|\\
&=|\frac{3}{5}f_0(x(\frac{j+1}{2^{m}}))-\frac{9}{25}f_0(x(\frac{j}{2^m}))-\frac{6}{25}f_0(x(\frac{j+2}{2^m}))|\\&+|\frac{3}{5}f_0(x(\frac{j+1}{2^{m}}))-\frac{6}{25}f_0(x(\frac{j}{2^m}))-\frac{9}{25}f_0(x(\frac{j+2}{2^m}))|
\end{align*}
by (5.3) and (5.4). If we write $a=f_0(x(\frac{j}{2^m}))$, $b=f_0(x(\frac{j+1}{2^m}))$ and $c=f_0(x(\frac{j+2}{2^m}))$, this is \begin{equation}
|\frac{3}{5}b-\frac{9}{25}a-\frac{6}{25}c|+|\frac{3}{5}b-\frac{6}{25}a-\frac{9}{25}c|.
\end{equation} 
If we fix $a$ and $c$ and vary $b$, then a lower bound for (5.5) is $\frac{3}{25}|c-a|$ by the triangle inequality. Substituting this lower bound into (5.4) yields
\begin{align*}\|g_{m+1}-g_{m}\|_1&\ge\frac{3}{25}\sum_{j=0}^{2^{m-1}+1}|f_0(x(\frac{j+1}{2^{m-1}}))-f_0(x(\frac{j}{2^{m-1}}))|\\&\ge\frac{3}{25}|f_0(x(1))-f_0(x(0))|,\end{align*} 
which shows $\{g_m\}$ does not converge in $L^1$ (if $f_0(x(1)))=f_0(x(0))$, just pass to a subinterval).

\end{proof}

\section{ 3-dimensional Sierpinski Gasket}

The 3-dimensional Sierpinski gasket may be realized in $\mathbb{R}^3$ as the attractor of the IFS
\begin{equation*}F_k x=\frac{1}{2}(x-q_k)+q_k, \end{equation*} where $E_0^0:=\{q_k\}_{k=0}^3$ are vertices of a regular tetrahedron. $E_1^0$ is defined to be the collection of edges $[q_i,q_j] (i\neq j)$, $E_2^0$ the collection of 2-dimensional faces $[q_i,q_j,q_k]$ ($i,j,k$ are distinct) and $E_3^0$ the simplex $[q_0,q_1,q_2,q_3]$.

\subsection{Graph approximation of 3-dimensional Sierpinski gasket}

For $m\in\mathbb{N}$, define $E_j^{m}$ inductively by \begin{equation*}E_j^{m}=\bigcup_k F_k E_j^{m-1}.\end{equation*}For $j=1,2,3$, these are disjoint unions. For $j=0$, we identify $F_j q_k=F_k q_j$ for $k\neq j$ and  it follows that $E_0^{0}\subset E_0^{1}\subset E_0^{2}\subset\cdots.$ It is easy to see that \begin{equation}
\begin{cases}
\# E_0^{m}&=2\cdot4^m+2\\
\# E_1^{m}&=6\cdot4^m\\
\# E_2^{m}&=4^{m+1}\\
\# E_3^{m}&=4^m.
\end{cases}
\end{equation}

Since all intersections occur at vertices, only the theory of 0-forms and 1-forms is nontrivial. In this section we discuss the harmonic 1-forms at each level.

We choose the orientation on 2-dimensional faces that views them from the `outside', that is, we take $[q_0,q_1,q_2]$, $[q_1,q_0,q_3]$, $[q_2,q_3,q_0]$ and $[q_3,q_2,q_1]$ as positive orientations on $E_2^{0}$ and carry them over to $E_2^{m}$. 

On the other hand, there is no consistent orientation on the edges. The counterclockwise convention has the boundary edges of $[q_0,q_1,q_2]$ with positive orientation being $[q_0,q_1]$, $[q_1,q_2]$ and $[q_2,q_0]$.  However, $[q_1,q_0]$ is of positive orientation on $[q_1,q_0,q_3]$. Similar in  consistency exists for every edge and can be written as
\begin{equation*}\sum_{e_2^0\supset e_1^0}sgn(e_1^0,e_2^0)=0.\end{equation*}
Similarly on every level one has
\begin{equation}\sum_{e_2^m\supset e_1^m}sgn(e_1^m,e_2^m)=0.\end{equation}

\subsection{Equations for harmonic 1-forms}
In analogy with the Sierpinski gasket, we have 
\begin{equation}d_1^{m}f_1^{m}(e_2^{m})=\sum_{e_1^{m}}sgn(e_1^{m},e_2^{m})f_1^{m}(e_1^{m})\end{equation}
and
\begin{equation}\delta_1^{m}f_1^{m}(e_0^{m})=\sum_{e_1^{m}}sgn(e_0^{m},e_1^{m})f_1^{m}(e_1^{m}),\end{equation}
where the scaling factors are left out for simplicity.

The equations for harmonic 1-forms $h_1^{m}\in\mathcal{H}^{m}_1$ are
\begin{equation}d_1^{m}h_1^{m}(e_2^{m})=0\end{equation}
and 
\begin{equation}\delta_1^{m}h_1^{m}(e_0^{m})=0\end{equation}
for each $e_2^{m}\in E^{m}_2$ and $e_0^{m}\in E^{m}_0$.

There are redundancies in these equations. For each simplex $e_3^{m}$, one has
\begin{equation}\sum_{e_2^{m}\subset e_3^{m}}d_1^{m}h_1^{m}(e_2^{m})=0\end{equation}
because the left-hand side equals 
\[\sum_{e_1^{m}\subset e_3^{m}}h_1^{m}(e_1^m)\sum_{e_2^m\supset e_1^m}sgn(e_1^m,e_2^m)\]where the inner sum vanishes by (6.2). Thus there are at most $3\cdot 4^m$ independent relations in (6.5).

Similarly, we have 
\begin{equation}\sum_{e_0^m\in E_0^{m}}\delta_1^{m}h_1^{m}(e_0^m)=0\end{equation}
since the sum can be written as 
\begin{equation*}\sum_{e_1^m\in E_1^{m}}h_1^{m}(e_1^m)\sum_{e_0^m\subset E_0^{m}}sgn(e_0^m,e_1^m)\end{equation*}and the inner sum vanishes because each edge has two vertices with opposite signs. Thus (6.6) gives at most $2\cdot 4^m+1$ independent relations.

Now $\# E_1^{m}-3\cdot4^m-(2\cdot4^m+1)=4^m-1$ suggests $dim(\mathcal{H}_1^{m})=4^m-1$. We demonstrate this by constructing an orthogonal basis for $\mathcal{H}_1^{m}$ containing $4^m-1$ elements.

Since $4^0-1=0$, there is no harmonic 1-form on level 0. However, we construct a 1-form in Figure 6.1 satisfying $d_1^{0}f_1^{0}(e_2^0)=0$ on each $e_2^{0}\in E_2^{0}$ and $\delta_1^{0}f_1^{0}(e_0^0)=0$ at $q_0$ and $q_1$ with $\delta_1^{0}f_1^{0}(q_2)=-\delta_1^{0}f_1^{0}(q_3)=2$. Note that by rotating this example we can make $\delta_1^{0}f_1^{0}(e_0^0)$ vanish at any pair of vertices. 

\begin{figure}
\scalebox{0.5}{\includegraphics{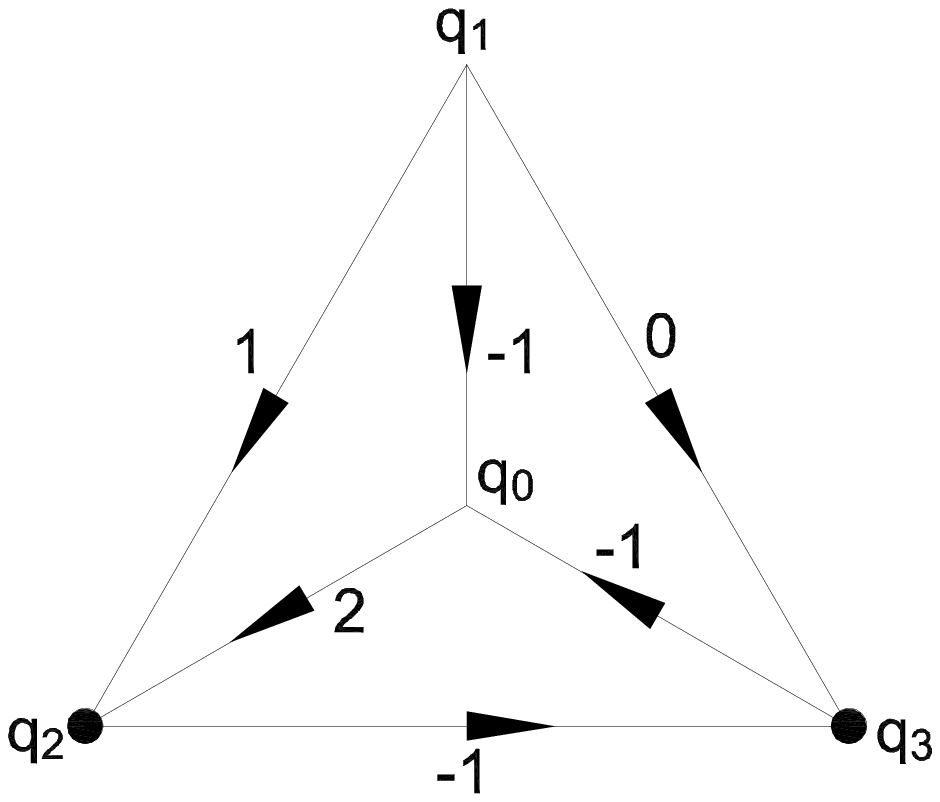}}

\caption{}

\end{figure}


By placing appropriate rotations of the above example in 3 of the 4 simplices and 0 on the fourth, we construct harmonic 1-forms on level 1. We just have to make sure that the  outer vertices are zeroes of $\delta_1^1 h_1^1$, as are the three vertices joining the simplex where $h_1^1$ is zero.  Along the triangle where the three simplices intersect, alternate $+2$ and $-2$ for $d_1^1 h_1^1$.

\begin{figure}
\scalebox{0.5}{\includegraphics{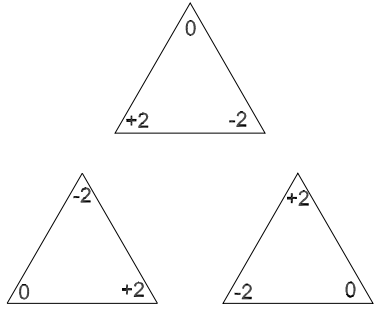}}
\caption{Placement of the example from Figure 6.1 in the three bottom simplices with values of $\delta_1^1$ noted at the bottom vertices. Values in the top simplex are 0.}
\end{figure}
This gives us a basic harmonic 1-forms in $\mathcal{H}_1^1$. By rotating we get 4 such 1-forms. But they are not linearly independent as they sum to 0. Choose an orthogonal basis, say, $A_1, A_2, A_3$ for the span.

\subsection{Integration along homology cycles}Now for each face $e_2^0$ there is a cycle $\gamma$ consisting of the inner triangle with three edges in $E_1^1$. 
For each $h_1^1\in\mathcal{H}_1^1$ we are interested in the integral $\int_{\gamma}h_1^1$ as this gives us a cohomology/homology pairing.

In view of (6.5), we have
\begin{equation}\int_{\gamma}h_1^1=\int_{\partial e_2^0}h_1^1,\end{equation}and 
\begin{equation}\sum_{j=1}^4 \int_{\gamma_{j}}h^1_1=0.\end{equation}

A direct computation shows $\int_{\gamma_j}h_1^1$ takes the values $(3,-1,-1,-1)$ for the 1-form in Figure 6.2 with the value 3 on the `bottom' face. Any three of these are linear independent, and any values of  $\int_{\gamma_j}h_1^1$ satisfying (6.10) can be attained by a harmonic 1-form in $\mathcal{H}_1^1$.

\subsection{Harmonic extension algorithm}
In order to extend the harmonic 1-forms in $\mathcal{H}_1^m$ to $\mathcal{H}_1^{m+1}$, it is convenient to observe that harmonic 1-forms may be written as $d_0f_0$ where $f_0$ is a harmonic mapping to $\mathbb{R}/\mathbb{Z}$. For example the 1-form in Figure 6.1 is  $d_0 f_0$ for the 0-form in Figure 6.3.

\begin{figure}
\scalebox{0.5}{\includegraphics{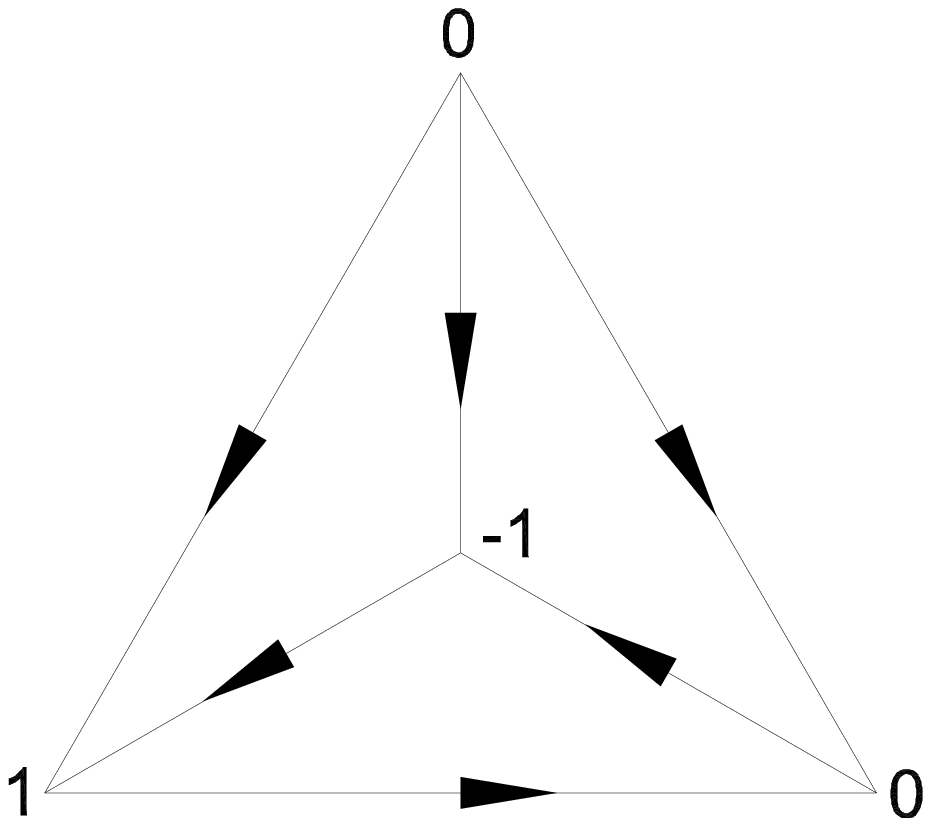}}
\caption{}
\end{figure}

When gluing together rotated copies of this 0-form we obtain contradictory values at junction vertices, but the values are the same in $\mathbb{R}/\mathbb{Z}$. It is easy to verify $\delta_1^1 d_1^1 f_0^1=0$ at all vertices.

To extend $f_0^1$ to $f_0^2$ we use the $\frac{1}{6}-\frac{1}{3}$ rule for extending harmonic functions since this is a local formula for each $e_3^1$. More generally, if $p_0,p_1,p_2$ and $p_3$ are vertices of any simplex with value $h(p_0),h(p_1),h(p_2)$ and $h(p_3)$ given, define
\begin{equation}h(p_{01})=\frac{1}{3}(h(p_0)+h(p_1))+\frac{1}{6}(h(p_2)+h(p_3))\end{equation} where $p_{01}$ denotes the midpoint of $\{p_0,p_1\}$, with a similar formula for other points.

A direct computation shows that the equations for harmonic functions are satisfied at all midpoints $p_{jk}$. At the original point $p_0$, one has 
\begin{equation*}3h(p_{0})-h(p_{01})-h(p_{02})-h(p_{03})=\frac{2}{3}(3h(p_0)-h(p_1)-h(p_2)-h(p_3)). \end{equation*}Similar equations hold at all original points, so that any harmonic condition at the original points  will be inherited by the extended functions: if $f_0^m$ is a harmonic mapping then the extension $f_0^{m+1}$ is also harmonic on the next level.

For $h_1^m\in\mathcal{H}_1^m$ we have $h_1^m=d_0^m f_0^m$ for some harmonic mapping, that is,
\begin{equation}h_1^m([p_j,p_k])=f_0^m(p_k)-f_0^m(p_j).\end{equation}

(6.11) and (6.12) give
\begin{equation}h_1^{m+1}([p_{0},p_{01}])=\frac{1}{3}h^m_1([p_0,p_1])+\frac{1}{6}h^m_1([p_0,p_2])+\frac{1}{6}h^m_1([p_0,p_3]) \end{equation}and
\begin{equation}h_1^{m+1}([p_0,p_{12}])=\frac{1}{6}h_1^m([p_1,p_2]).\end{equation}
We may simply take (6.12) and (6.13) (and similar formulas for other $e_1^{m+1}$) as our extension algorithm.

A direct calculation shows $h_1^{m+1}([p_0,p_{01}])+ h_1^{m+1}([p_{12},p_{0}])+h_1^{m+1}([p_{01},p_{12}])=0$, etc, follows from (6.13) and (6.14). That is, the analog of (6.5) on (m+1)-level holds.
To show that we have an extension, namely, 
\begin{equation}h_1^{m+1}([p_0,p_{01}])+h_1^{m+1}([p_{01},p_{1}])=h_1^{m}([p_0,p_1]),\end{equation}
we simply apply  (6.13), (6.14) and (6.5) to the faces $[p_0,p_1,p_2]$ and $[p_0,p_1,p_3]$.
The conditions $\delta_1^{m+1}h_1^{m+1}(p_{01})=0$ etc at midpoints follow from (6.13) and (6.14).
Note that
\begin{align}h_1^{m+1}([p_0,p_{01}])+h_1^{m+1}([p_0,p_{02}])+h_1^{m+1}([p_0,p_{03}])&=\\ \notag\frac{2}{3}(h_1^{m+1}([p_0,p_{1}])+&h_1^{m+1}([p_0,p_{2}])+h_1^{m+1}([p_0,p_{3}]))\end{align}
and similar equations follow directly from (6.13). Then (6.16) allows us to pass from $\delta_1^m h_1^m(p_0)=0$ to $\delta_1^{m+1}h_1^{m+1}(p_0)=0$. Thus the analog of (6.6) for (m+1)-level is also true for all $e_0^{m+1}\in E_0^{m+1}$.

\begin{thm}The dimension of $\mathcal{H}_1^m$ is $4^m-1$. There is a unique extension mapping from $\mathcal{H}_1^{m}$ to $\mathcal{H}_1^{m+1}$ given by (6.13) and (6.14), where `extension' means (6.15).

Moreover, the extension satisfies \begin{equation}\int_{\gamma}h_1^{m+1}=0\end{equation}
if $\gamma$ is a homology cycle in some $e_2^{m}\in E_2^{m}$.
\end{thm}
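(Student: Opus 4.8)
The plan is to settle the three assertions in turn, leaning on the Hodge machinery of Section 2 and on the computations already carried out in this section.

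\textbf{The dimension.} The explicit family of harmonic $1$-forms gives one inequality: miniaturize each of the basic level-$1$ forms $A_1,A_2,A_3$ into every level-$k$ simplex with $0\le k<m$ and extend by the algorithm (6.13)--(6.14). This produces $\sum_{k=0}^{m-1}3\cdot 4^{k}=4^{m}-1$ harmonic $1$-forms, and they are mutually orthogonal in $\langle\,,\,\rangle_1^m$ for the two familiar reasons --- disjoint supports when the underlying cells are disjoint, and, when one cell contains the other, a scaling identity $\langle f_1^{k+1},g_1^{k+1}\rangle_1^{k+1}=c\,\langle f_1^{k},g_1^{k}\rangle_1^{k}$ inherited under extension, checked exactly as (4.15) in the proof of Theorem 4.2. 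Hence $\dim\mathcal H_1^{m}\ge 4^{m}-1$. For the reverse inequality I would invoke the Euler--Poincar\'e identity $\sum_k(-1)^k\dim\mathcal H_k^m=\sum_k(-1)^k\#E_k^m$, a consequence of the Hodge decomposition (Theorem 2.3). Here $\dim\mathcal H_0^m=1$ since constants are the only harmonic $0$-forms; $\mathcal H_3^m=0$ because every level-$m$ face lies in a unique level-$m$ simplex (cells of $SG^3$ meet only at vertices), so $\delta_3^m$ is injective; and $\mathcal H_2^m=0$ --- either by the same cell-wise direct-sum argument applied to $d_2^m,\delta_2^m$ together with the orthogonality $d_1D_1\perp\delta_3D_3$ from Theorem 2.3, or simply because the realization $\Omega_m$ of the complex with cells $E_\bullet^m$ is a union of solid tetrahedra meeting in single points, hence homotopy equivalent to a graph. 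Plugging the counts (6.1) into the identity yields $1-\dim\mathcal H_1^m=(2\cdot4^m+2)-6\cdot4^m+4^{m+1}-4^m=2-4^m$, i.e.\ $\dim\mathcal H_1^m=4^m-1$; in particular the $4^m-1$ orthogonal forms above form a basis.

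\textbf{The extension mapping.} The verifications are essentially those displayed before the statement; I would only assemble them. Write $h_1^m=d_0^m f_0^m$ for the ($\mathbb R/\mathbb Z$-valued) harmonic potential, extend $f_0^m$ to $f_0^{m+1}$ by the energy-minimizing rule (6.11), and set $h_1^{m+1}=d_0^{m+1}f_0^{m+1}$; this is precisely (6.13)--(6.14). Then $d_1^{m+1}h_1^{m+1}=0$ on the new sub-faces by direct computation from (6.13)--(6.14) and on the remaining faces from (6.5) at level $m$; $\delta_1^{m+1}h_1^{m+1}=0$ at new midpoints by direct computation and at old vertices via $\delta_1^{m+1}h_1^{m+1}(p_0)=\tfrac23\,\delta_1^m h_1^m(p_0)=0$, i.e.\ (6.16); and the extension property (6.15) follows from (6.13)--(6.14) and (6.5). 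Uniqueness is to be read as uniqueness of the local rule: the harmonic extension of the potential $f_0^m$ is unique, hence so is $h_1^{m+1}=d_0 f_0^{m+1}$. I would flag that (6.15) \emph{alone} does not determine $h_1^{m+1}$ inside $\mathcal H_1^{m+1}$, since one may add any of the ``new'' harmonic $1$-forms carried by the inner cycles created at level $m+1$.

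\textbf{Vanishing over new cycles.} If $\gamma$ is the inner triangle inside a level-$m$ face $e_2^m\in E_2^m$, then $\gamma=\pm\,\partial_2 e_2^{m+1}$ for the inner level-$(m{+}1)$ sub-face $e_2^{m+1}\subset e_2^m$, so Stokes' Theorem 2.4 gives $\int_\gamma h_1^{m+1}=\pm\,d_1^{m+1}h_1^{m+1}(e_2^{m+1})=0$ because $h_1^{m+1}$ is harmonic. (Any other cycle supported in $e_2^m$ is a combination of $\gamma$ and $\partial e_2^m$, and $\int_{\partial e_2^m}h_1^{m+1}=\int_{\partial e_2^m}h_1^m=d_1^m h_1^m(e_2^m)=0$ by (6.15) and harmonicity at level $m$.) This is the $SG^3$ analogue of the ``$|\omega'|>|\omega|$'' clause of Lemma 4.1 and is exactly what makes the inductive construction of a dual basis, as in (4.17), possible. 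The one genuinely non-routine point is the \emph{upper} bound $\dim\mathcal H_1^m\le 4^m-1$: the equation count and the explicit orthogonal family both give only ``$\ge$'', so one must bring in either the Euler--Poincar\'e identity with $\mathcal H_2^m=\mathcal H_3^m=0$, or the exact ranks of $d_1^m$ and $\delta_1^m$ obtained from the cell-wise direct-sum decomposition and the Hodge orthogonalities; everything else is bookkeeping of the kind already on display here.
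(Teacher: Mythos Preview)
Your argument for the vanishing over new cycles contains a genuine error. You claim that the inner triangle $\gamma$ in a level-$m$ face $e_2^m$ equals $\pm\partial_2 e_2^{m+1}$ for an ``inner level-$(m{+}1)$ sub-face'' $e_2^{m+1}\subset e_2^m$. But no such face exists: by construction $E_2^{m+1}=\bigcup_k F_kE_2^m$, so the level-$(m{+}1)$ faces lying inside $e_2^m$ are exactly the three \emph{corner} triangles, and the inner triangle is the hole---a nontrivial homology cycle, not a boundary. Stokes' theorem cannot be applied directly. The paper's argument supplies the missing step: as $1$-chains one has
\[
\gamma=\pm\Bigl(\partial e_2^m-\sum_{e_2^{m+1}\subset e_2^m}\partial e_2^{m+1}\Bigr),
\]
so $\int_\gamma h_1^{m+1}$ is the difference of $d_1^m h_1^m(e_2^m)$ (using (6.15)) and $\sum d_1^{m+1}h_1^{m+1}(e_2^{m+1})$, both of which vanish by harmonicity at levels $m$ and $m+1$.

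Your caveat about uniqueness is also off. The ``new'' level-$(m{+}1)$ harmonic forms $A_j\circ F_\omega^{-1}$ with $|\omega|=m$ do \emph{not} restrict to zero on level-$m$ edges: by (6.9) the restriction $\bar A_j$ has $d_1^m\bar A_j$ equal to the nonzero face integrals $(3,-1,-1,-1)$, and these are linearly independent across $j$, so no nontrivial combination can be added without violating (6.15). Equivalently, once (6.17) is in hand, any $g\in\mathcal H_1^{m+1}$ with zero level-$m$ restriction has every cycle integral equal to zero (old cycles via the restriction, new cycles via the chain identity above), hence $g=0$ by the Hodge pairing. The extension really is unique, as the theorem asserts and as the paper's final paragraph argues via cycle integrals. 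Your Euler--Poincar\'e route to the dimension, by contrast, is a legitimate alternative---and is in fact more explicit about the upper bound than the paper, which relies on the cycle-integral determination of harmonic forms to pin down equality.
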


\begin{proof}Condition (6.17) holds because the integral over $\gamma$ is the difference between the integral over $\partial e_2^{m}$ and the sum of integrals over the boundaries of the three faces $e_2^{m+1}$ contained in $e_2^m$, all of which vanish because (6.5) holds for $m$ and $m+1$.

Denote by $A_1,A_2,A_3$ the basis of $\mathcal{H}^1_1$ extended to 1-forms on the m-level. A basis for $\mathcal{H}_1^m$ is $A_j \circ F_{\omega}^{-1}$, where $j=1,2,3$ and $|\omega|\le m-1$, which contains $3\cdot (1+4+4^2+\cdots+4^{m-1})=4^m-1$ elements (It is easy to check the linear independence by (6.17)).

If we choose a set of 3 homology cycles in the faces of each simplex $e_3^{k}$ for $k\le m-1$, then we get $4^m-1$ independent cycles and we can $uniquely$ specify the integrals over each cycle for functions in the span of our basis.
\end{proof}
Analogous to Theorem 4.2, we can show our basis is orthogonal.

(S. Aaron) Reed College, Oregon, USA\\ \textit{E-mail address:} skye.aaron@gmail.com

(Z. Conn) Rice University, Texas, USA\\ \textit{E-mail address:} zpc1@rice.edu

(R. Strichartz) Department of Mathematics, Cornell University, NY, USA\\ \textit{E-mail address:} str@math.cornell.edu

(H. Yu) The Chinese University of Hong Kong, Hong Kong\\ \textit{E-mail address:} huiyu0606@gmail.com
\end{document}